\documentclass[12pt,reqno]{amsart}
\usepackage{amsfonts}
\usepackage{amsthm}
\usepackage{amsmath}
\usepackage{amscd}
\numberwithin{equation}{section}
\usepackage{hyperref}
\hypersetup{nesting=true,debug=true,naturalnames=true}
\usepackage{graphicx,amssymb,upref}
\usepackage{enumerate}
\usepackage[all]{xy}
\usepackage{color}
\usepackage{mathrsfs}
\usepackage{caption}
\usepackage{paralist}
\usepackage{color}
\usepackage{float}
\usepackage{tikz,pgf}
\usetikzlibrary{calc}
\usetikzlibrary{intersections,decorations.markings}
\usepackage{tikz-cd}
\usepackage{lineno}

\theoremstyle{definition}
\newtheorem{theorem}{\bf Theorem}[section]

\newtheorem{lemma}[theorem]{\bf Lemma}

\theoremstyle{definition}
\newtheorem{example}[theorem]{\bf Example}

{

}
\newcommand{\mm}[1]{\mathrm{#1}}
\newcommand{\mb}[1]{\mathbb{#1}}

\makeatletter
\@namedef{subjclassname@2020}{\textup{2020} Mathematics Subject Classification}
\makeatother

\begin{document}

\title[{\fontsize{7}{7}\selectfont  Loop Space Splittings for Codimensional Sphere Bundles}]{Loop Space Splittings for Codimensional Sphere Bundles}

\author[{\fontsize{7}{7}\selectfont Wen Shen}]{Wen Shen}
\keywords{Sphere bundle, Loop decomposition}
\subjclass[2020]{Primary 55P15, 55P35}

\address{College of Mathematics and Physics, Wenzhou University, Wenzhou, P.R.China}
\email{shenwen121212@163.com}

\begin{abstract}
In this paper, we establish two loop decomposition theorems for sphere bundles of real vector bundles under appropriate connectivity and characteristic class assumptions. Specifically, we treat two families of sphere bundles: those arising from rank-$(n-1)$ vector bundles and those from rank-$(n-3)$ vector bundles over closed smooth $n$-manifolds.
\end{abstract}

\maketitle

\section{Introduction}

Loop decomposition equivalence is an increasingly prominent classification paradigm. Formally, two manifolds \(X\) and \(Y\) are equivalent under this relation if their loop spaces satisfy \(\Omega X\simeq \Omega Y\), where \(\Omega\) denotes the loop functor that maps a topological space \(X\) to the space of all based loops in \(X\), and a continuous map \(f:X\to Y\) to the induced map on loops.

 Beben and Theriault \cite{BeTh} proved that two $(n-1)$-connected $2n$-dimensional manifolds are loop-equivalent if and only if they share the same $n$-th Betti number. Parallel results for the odd-dimensional counterpart, namely $(n-1)$-connected $(2n+1)$-manifolds, were established independently by Beben and Wu \cite{BW}, and Huang and Theriault \cite{HT2022}. More recently, Stanton and Theriault \cite{ST} investigated a loop space decomposition for simply-connected Poincar\'e duality complexes of dimension $n$ whose $(n-1)$-skeleton is a co-H-space. A key advantage of these loop decomposition theorems is that they reduce the computation of homotopy groups of highly connected manifolds to that of homotopy groups of spheres and Moore spaces. Related calculations can also be found in the work of Samik Basu and Somnath Basu \cite{Basu,BasuBasu}.

Research on loop decompositions has lately been extended to sphere bundles over $4$-manifolds \cite{Huang23,Huang25}. A notable result in \cite{Huang25} states that the loop spaces of sphere bundles associated to most real vector bundles over simply connected closed $4$-manifolds decompose into products of simpler homotopy types. Shen \cite{Shen} further generalized this conclusion to sphere bundles over $(n-1)$-connected $2n$-Poincar\'e complexes.

This paper continues this line of investigation, focusing on loop space splittings for sphere bundles of real vector bundles over specific manifolds. Motivated by \cite{Theri}, we observe that if a sphere bundle
\begin{equation}
	S^m\stackrel{ i}{\to}  E\stackrel{\pi}{\to} N \label{startbundle}
\end{equation}
  admits a homotopy section $s:N\to E$ (that is, $\pi\circ s$ is a homotopy equivalence), then the  composition
$$\Omega S^m\times \Omega N\stackrel{\Omega i\times \Omega s}{\rightarrow}\Omega E\times \Omega E\stackrel{\mu}{\to }\Omega E$$
induces isomorphisms on $\pi_i$ for $i\ge 0$, thus
is a homotopy equivalence, where $\mu$ is the product of loop space. 

Suppose $N$ is a $n$-manifold. For the bundle \eqref{startbundle}, cell structure analysis shows that a homotopy section always exists when $m\ge n$, but may fail for $m<n$. For $m=n-1$, the bundle \eqref{startbundle} has a homotopy section if its Euler class is trivial. 
In this work, we exploit the existence of homotopy sections to establish our main theorems.  

 We now state our first main theorem.

\begin{theorem}\label{mainth1}
Let $n\ge 5$, and let $N$ be a non-spin, simply connected closed smooth $n$-manifold. For any real spin  vector bundle of rank $n-1$ over $N$, the associated sphere bundle 
$$S^{n-2}\to M\to N$$
 splits after looping, i.e., $\Omega M\simeq \Omega S^{n-2}\times \Omega N$.
\end{theorem}

Before presenting the second theorem, we recall the cohomology group of the classifying space $\mm{BSpin}$. $H^4(\mm{BSpin})\cong \mb{Z}$ is generated by a class $X$ such that $2X=p_1$, where $p_1$ is the first Pontryagin class of the universal bundle over $\mm{BSpin}$, and $X\equiv w_4 \pmod 2$ for the fourth Stiefel-Whitney class  $w_4$. Accordingly, for any spin bundle $\xi$ over a $3$-connected complex, $p_1(\xi)$ is divisible by $2$, and the congruence $w_4(\xi)\equiv \frac{1}{2}p_1(\xi)\pmod 2$ holds.   

\begin{theorem}\label{mainth2}
Let $n\ge 8$, and let $N$ be a $3$-connected closed smooth $n$-manifold satisfying:
\begin{enumerate}
\item either $\frac{1}{2} p_1(N)\equiv \text{1, 5}\pmod 6$,
\item or 
 $\frac{1}{2} p_1(N)\equiv 3\pmod 6$ and $n\equiv 0\pmod 2$.
\end{enumerate}
   If a real vector bundle $\xi$ of rank $n-3$ over $N$ satisfies $\frac{1}{2} p_1(\xi)=0$, then its associated sphere bundle
$$S^{n-4}\to M\to N$$
splits after looping, i.e., 
$\Omega M\simeq \Omega S^{n-4}\times \Omega N$.
\end{theorem}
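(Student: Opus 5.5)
By the observation in the introduction, it suffices to produce a homotopy section $s:N\to M$ of $\pi$; then $\mu\circ(\Omega i\times\Omega s)$ gives $\Omega M\simeq\Omega S^{n-4}\times\Omega N$.

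\textbf{Step 1: sections over the $(n-1)$-skeleton.} Since $N$ is $3$-connected, it has a CW structure $N=N_{n-1}\cup e^n$ with $N_{n-1}$ built from cells of dimension $4,\dots,n-4$ (Poincar\'e duality) and possibly cells of dimension $n-3,\dots,n-1$. The fibre $S^{n-4}$ is $(n-5)$-connected, so obstruction theory gives a section over the $(n-4)$-skeleton. The obstructions on cells of dimension $\ge n-3$ lie in $H^{k}(N;\pi_{k-1}(S^{n-4}))$ for $k=n-3,\dots,n$, with coefficients $\pi_{n-4},\pi_{n-3},\pi_{n-2},\pi_{n-1}$ of $S^{n-4}$, i.e.\ $\mathbb Z,\mathbb Z/2,\mathbb Z/2,\mathbb Z/24$ in the stable range ($n\ge 8$ makes $S^{n-4}$ at least $S^4$; I would check the unstable case $n=8$ separately, where $\pi_7(S^4)$ has an extra $\mathbb Z$ factor).

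\textbf{Step 2: the low obstructions.} The primary obstruction is the Euler class $e(\xi)\in H^{n-3}(N;\mathbb Z)\cong H_3(N)=0$, since $N$ is $3$-connected. The next obstructions are governed by Stiefel--Whitney classes. Dually, $H^{n-2}(N;\mathbb Z/2)\cong H_2=0$ and $H^{n-1}(N;\mathbb Z/2)\cong H_1=0$, so they vanish automatically. Thus the main work sits on the top cell.

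\textbf{Step 3: the top obstruction.} The top obstruction is a class $o\in H^n(N;\pi_{n-1}(S^{n-4}))\cong\mathbb Z/24$, stably $\pi^s_3$. The point is that $o$ is only defined modulo the indeterminacy coming from changing the section on lower cells. The hypothesis $\tfrac12p_1(\xi)=0$ should make $\xi$ trivial on the $7$-skeleton, or at least make its $J$-image vanish there. The key indeterminacy then comes from $H^{4}(N)$ paired by duality with $H^{n-4}(N)$: varying the section on an $(n-4)$-cell by a class $a$ changes $o$ by a Pontryagin-type product.

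I would identify this change as roughly $\langle \tfrac12 p_1(N)\cdot a\,\cdot\, \mathrm{PD}\rangle\cdot\nu$, where $\nu$ generates $\pi_3^s$. The reason is that the attaching map of the top cell of $N$ restricted to a $4$-cell and its dual $(n-4)$-cell involves $J$ of the tangent bundle, whose $e$-invariant is $\tfrac12 p_1(N)$ modulo $24$.

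The conditions then match the required arithmetic. If $\tfrac12 p_1(N)\equiv 1,5\pmod 6$, it is prime to $6$, hence a unit mod $24$ after using $\gcd$ with $24$. It should then generate $\mathbb Z/24$ (or at least cover the $3$- and $2$-parts), so the indeterminacy kills $o$. If it is $\equiv 3\pmod 6$, the $3$-part is lost. In that case I would use $n$ even to argue that the $3$-primary part of $o$ vanishes, perhaps via a symmetry or Wu-formula argument on the middle pairing.

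\textbf{Main obstacle.} The hardest step is Step 3: precisely computing the top-cell obstruction and its indeterminacy in terms of $\tfrac12p_1(N)$ and $\tfrac12p_1(\xi)$. I would first try to express $M$ restricted over $N_{n-1}\cup e^n$ via the clutching/J-homomorphism and compute using $e$-invariants.

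Once the section exists on all of $N$, $s$ is a genuine section, and the splitting follows from the introduction's argument.
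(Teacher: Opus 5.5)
Your Steps 1--2 and the reduction to finding a section are fine. Step 3, however, is the entire content of the theorem, and you only conjecture it. The change formula ``roughly $\langle\frac12p_1(N)\cdot a\rangle\nu$'', the claim that it reaches the $2$- and $3$-primary parts, and the case $\frac12p_1(N)\equiv3\pmod 6$ are all stated as expectations.

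The first concrete gap is structural. Over $N$ itself, the effect of re-choosing the section on an $(n-4)$-cell is not controlled by your formula alone. Since $N^{(n-4)}$ contains $4$-cells, $\pi_{n-1}(N^{(n-4)}\vee S^{n-4})$ contains Whitehead products $[\iota_4,\iota_{n-4}]$. Also, $M$ has extra $n$-cells (fibre cell times $4$-cells) whose attaching maps involve $\xi|_{N^{(4)}}$. These cross terms must be disposed of, and that is exactly where $\frac12p_1(\xi)=0$ has to be used; your argument never uses it.

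The paper uses this hypothesis to give $\xi$ a string structure, so $M$ is pulled back from an $S^{n-4}$-bundle $E\to N_5=N/N^{(4)}$ (Lemma~\ref{fibinN5}). Since $N_5^{(n-4)}$ is $4$-connected, $\pi_{n-1}(N_5^{(n-4)}\vee S^{n-4})\cong\pi_{n-1}(N_5^{(n-4)})\oplus\pi_{n-1}(S^{n-4})$. Moreover, $E^{(n)}\simeq(N_5^{(n-4)}\vee S^{n-4})\cup_f e^n$ has a single top cell with $f=(\alpha,\beta,\gamma)$. Your ``change on the dual cell'' then becomes the explicit map $(\mathrm{id}\vee\mathfrak m)\circ\phi$, where $\phi$ pinches the $(n-4)$-cell carrying a suitable class $y$. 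This map sends $\alpha$ to $(\alpha,m\ell,m\delta)$, and choosing $m$ with $(m\ell,m\delta)=(\beta,\gamma)$ gives a homotopy section $N_5\to E$. The splitting of $\Omega M$ then follows by transporting a left homotopy inverse of $\Omega$ of the fibre inclusion along $\Omega q$, so no genuine section of $M\to N$ is needed.

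The second gap is why $(\ell,\delta)$ generates $\mathbb Z_8\oplus\mathbb Z_3$. You appeal to an $e$-invariant identification that is plausible via Atiyah duality but unproved. The paper needs much less. In $\pi_{n-1}(S^{n-4})$, $\mathrm{Sq}^4$ detects generators of $\mathbb Z_8$ and $\mathcal P^1$ detects generators of $\mathbb Z_3$. By the Wu formula ($v_4=w_4\equiv\frac12p_1(N)$ mod $2$, and the mod-$3$ Wu class $\equiv p_1(N)$), both $\mathrm{Sq}^4\rho_2y$ and $\mathcal P^1\rho_3y$ are nonzero for any primitive $y$ whose pairing with $\frac12p_1(N)$ is prime to $6$. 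Producing one such primitive class when $\mathrm{rank}\,H^4(N)>1$ is Lemma~\ref{Sq4onN}(3), which your ``after using gcd with $24$'' skips.

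For $\frac12p_1(N)\equiv3\pmod6$, the missing input is not a middle-dimensional pairing but a Cartan-formula argument in the total space. For the fibre class $x\in H^{n-4}(E;\mathbb Z_3)$ one has $x^2=0$, so for $n$ even $0=\mathcal P^1(x^2)=2\,x\cup\mathcal P^1x$. On the other hand, $H^*(E;\mathbb Z_3)\cong H^*(S^{n-4}\times N_5;\mathbb Z_3)$ forces $x\cup\mathcal P^1x\neq0$ whenever $\mathcal P^1x\neq0$. Hence $\gamma=0$, and only the $\mathbb Z_8$-component must be matched.

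Finally, your caution about $n=8$ is justified, and the case stays open in your proposal. The paper's argument as written does not cover it either. For $n=8$ the space $N_5$ is $4$-connected, so $H^4(N_5)=0$ and no class $y$ exists. Also $\pi_7(S^4)\cong\mathbb Z\oplus\mathbb Z_{12}$, not $\mathbb Z_8\oplus\mathbb Z_3$.
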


This paper is organized as follows. Section \ref{Sevec} discusses basic applications of Wu classes and the Wu formula. Sections \ref{Proof1} and \ref{Proof2} present the detailed proofs of Theorem \ref{mainth1} and Theorem \ref{mainth2}, respectively. Indeed, we prove that certain sphere bundles admit homotopy sections (see Lemma \ref{loopspit} and Lemma \ref{loopspit2}) via the coaction perspective introduced in \cite[2.45]{Sw}. Finally, we construct two families of explicit examples to illustrate Theorem \ref{mainth1} and Theorem \ref{mainth2}, respectively. 

\section{Preliminary}\label{Sevec}
In this section, we study Steenrod operations acting on the cohomology groups of smooth manifolds.
 
\begin{lemma}\label{Sq2onN}
	Let $N$ be a closed simply connected non-spin smooth $n$-manifold. Then there exists a generator $x\in H^{n-2}(N)$ such that $$0\ne \mm{Sq}^2\rho_2(x)\in H^n(N;\mb{Z}_2)$$ where $\rho_2$ denotes the mod $2$ reduction.
\end{lemma}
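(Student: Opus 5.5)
The plan is to translate the non-spin hypothesis into a statement about the second Wu class, and then use the Wu formula together with Poincaré duality. Since $N$ is simply connected it is orientable, so $w_1(N)=0$. Non-spin then means $w_2(N)\neq 0$.

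Let $v=1+v_1+v_2+\cdots$ be the total Wu class of $N$. It is characterized by
$$\mm{Sq}^i(y)=v_i\cup y\qquad\text{for all } y\in H^{n-i}(N;\mb{Z}_2).$$
The Wu formula $w(N)=\mm{Sq}(v)$ gives $w_1=v_1$ and $w_2=v_2+\mm{Sq}^1 v_1=v_2+v_1^2$. Since $v_1=w_1=0$, we get $v_2=w_2(N)\neq 0$.

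Mod $2$ Poincaré duality says the cup product pairing $H^2(N;\mb{Z}_2)\times H^{n-2}(N;\mb{Z}_2)\to H^n(N;\mb{Z}_2)\cong\mb{Z}_2$ is nondegenerate. Hence there is some $y\in H^{n-2}(N;\mb{Z}_2)$ with
$$\mm{Sq}^2 y=v_2\cup y\neq 0.$$

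Next I lift $y$ to an integral class. The universal coefficient theorem gives an exact sequence
$$0\to H^{n-2}(N)\otimes\mb{Z}_2\xrightarrow{\rho_2} H^{n-2}(N;\mb{Z}_2)\to \mathrm{Tor}(H^{n-1}(N),\mb{Z}_2)\to 0.$$
By Poincaré duality $H^{n-1}(N)\cong H_1(N)=0$, because $N$ is simply connected. So the Tor term vanishes and $\rho_2$ is surjective in degree $n-2$. Choose $x'\in H^{n-2}(N)$ with $\rho_2(x')=y$.

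Finally, to get a generator, fix a generating set $\{g_1,\dots,g_k\}$ of $H^{n-2}(N)$, for instance a basis of the free part together with generators of the cyclic torsion summands. Write $x'=\sum a_jg_j$ with $a_j\in\mb{Z}$. Since $\mm{Sq}^2\circ\rho_2$ is additive,
$$0\neq \mm{Sq}^2\rho_2(x')=\sum_j a_j\,\mm{Sq}^2\rho_2(g_j).$$
So for some $j$ we have $\mm{Sq}^2\rho_2(g_j)\neq 0$, and we take $x=g_j$.

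The only step needing genuine input is the surjectivity of $\rho_2$, which is exactly where simple connectivity enters through $H^{n-1}(N)=0$. The remaining steps are formal consequences of the Wu formula and duality. I expect the main point of care to be the word ``generator'': I will read it as a member of a chosen generating set (equivalently, a summand generator in a cyclic decomposition), and the linearity argument above handles that.
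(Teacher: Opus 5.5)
Your proposal is correct and follows essentially the same route as the paper: surjectivity of $\rho_2$ in degree $n-2$ from $H^{n-1}(N)\cong H_1(N)=0$, the identification $v_2=w_2\neq 0$, then Poincar\'e duality and the Wu formula. Your additivity argument for passing from an arbitrary lift $x'$ to a member of a cyclic generating set (including torsion generators) makes explicit a step the paper only asserts.
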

\begin{proof}
We prove this lemma in two steps.

	(1) The mod $2$ reduction $\rho_2:H^{n-2}(N)\to H^{n-2}(N;\mb{Z}_2)$ is surjective. 

This follows from the following exact sequence
$$H^{n-2}(N)\stackrel{\times 2}{\to }H^{n-2}(N)\stackrel{\rho_2}{\to }H^{n-2}(N;\mb{Z}_2)\stackrel{\delta}{\to }H^{n-1}(N)$$
where $H^{n-1}(N)\cong H_{1}(N)\cong \pi_1(N)=0$.

(2) There exists a generator $x\in H^{n-2}(N)$ such that $\mm{Sq}^2\rho_2(x)\ne 0$.

Since $N$ is non-spin, we have $w_2(N)\ne 0$. By the Poincar\'e duality and Item (1), there exists a generator $x\in H^{n-2}(N)$ such that $$0\ne \rho_2(x)\cup w_2(N)\in H^n(N;\mb{Z}_2)$$ 
Recall from \cite[pp.132]{MilnorStasheff} that the second Wu class satisfies $v_2(N)=w_2(N)$.
By the Wu formula, $\mm{Sq}^2\rho_2(x)=\rho_2(x)\cup v_2(N)\ne 0$. This completes the proof.
\end{proof}

\begin{lemma}\label{Sq4onN}
	Let $N$ be a closed smooth $3$-connected $n$-manifold.
	\item (1) If ${p_1(N)}\equiv 0\pmod 3$, then it is trivial that the mod $3$ Steenrod operation
	$\mathcal P^1:H^{n-4}(N;\mb{Z}_3)\to H^n(N;\mb{Z}_3)$. 
	\item (2) If $\frac{1}{2} p_1(N)\equiv 1\pmod 2$, then there is a generator $x\in H^{n-4}(N)$ such that $$0\ne \mm{Sq}^4\rho_2(x)\in H^n(N;\mb{Z}_2)$$ where $\rho_2$ denotes the mod $2$  reduction.
	\item (3) If $\frac{1}{2} p_1(N)\equiv \text{1 or 5}\pmod 6$, then there is a generator $ x\in H^{n-4}(N)$ such that $$0\ne \mm{Sq}^4\rho_2(x)\in H^n(N;\mb{Z}_2),\quad 0\ne \mathcal P^1\rho_3( x)\in H^n(N;\mb{Z}_3)$$
	where $\rho_3$ denotes the mod $3$ reduction.
\end{lemma}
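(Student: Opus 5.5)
The plan is to follow the template of Lemma \ref{Sq2onN}: use Wu's formula to turn the top-degree Steenrod operations into cup products with Wu classes, and then identify those Wu classes with characteristic classes of $N$ via the connectivity hypothesis. The first step is to record what $3$-connectedness gives. The groups $H^1,H^2,H^3$ vanish, integrally and with any coefficients. By Poincar\'e duality, $H^{n-1}(N)\cong H_1(N)=0$ and $H^{n-3}(N)\cong H_3(N)=0$. The universal coefficient theorem together with $H_2=0$ shows that $H^{n-3}(N)$ has no torsion coming from $H_{2}$.

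Surjectivity of the reductions $\rho_p:H^{n-4}(N)\to H^{n-4}(N;\mb{Z}_p)$ for $p=2,3$ then follows, as in step (1) of Lemma \ref{Sq2onN}, from the Bockstein sequence
$$H^{n-4}(N)\stackrel{\times p}{\to}H^{n-4}(N)\stackrel{\rho_p}{\to}H^{n-4}(N;\mb{Z}_p)\stackrel{\delta}{\to}H^{n-3}(N)=0.$$

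Next I identify the Wu classes. Since $w_1=w_2=w_3=0$, the Wu formula $w=\mm{Sq}(v)$ forces $v_1=v_2=v_3=0$ and $v_4=w_4(N)$. The tangent bundle of a $3$-connected manifold is spin, so the congruence recalled before Theorem \ref{mainth2} gives $w_4(N)=\rho_2(\tfrac12 p_1(N))$. For the mod $3$ operation I use the mod $p$ Wu formula: the class $v$ with $\mathcal P^1 y=v\cup y$ on $H^{n-4}(N;\mb{Z}_3)$ is the first mod $3$ Wu class. Up to a unit, this class equals $\rho_3(p_1(N))$, which is the standard result that the first Wu class of $\mathcal P^1$ is the mod $3$ reduction of the first Pontryagin class, coming from the formula for $\mathcal P^1$ on Thom classes. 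I expect this mod $3$ identification to be the main obstacle. I would cite it from Milnor--Stasheff (the mod $p$ Wu/Hirzebruch formula) rather than derive it. The sign is harmless because only the vanishing or nonvanishing of the operation matters.

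With these identifications the items follow. For (1), $\mathcal P^1 y=\pm\rho_3(p_1(N))\cup y=0$ whenever $3\mid p_1(N)$. For (2), $\tfrac12p_1(N)$ odd means $w_4(N)\neq0$. Poincar\'e duality over $\mb{Z}_2$ then gives some $\bar x\in H^{n-4}(N;\mb{Z}_2)$ with $\bar x\cup w_4\neq0$; I lift $\bar x$ to $x$ and conclude $\mm{Sq}^4\rho_2(x)=\rho_2(x)\cup v_4\neq0$. I will arrange $x$ to be a generator, meaning a member of a basis of the free part modulo torsion, by adjusting a lift. For (3), $\tfrac12p_1(N)\equiv\pm1\pmod 6$ means $\tfrac12 p_1$ is odd and prime to $3$, so $\rho_2(\tfrac12p_1)\neq0$ and $\rho_3(p_1)\neq0$. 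I need a single integral $x$ pairing nontrivially with both classes. Here I use the fact that $H^4(N)$ modulo torsion pairs unimodularly with $H^{n-4}(N)$ modulo torsion. Writing $\tfrac12 p_1=d\cdot u$ on the free part with $u$ primitive and $d$ prime to $6$, I choose $x$ in a basis dual to one containing $u$, so that $\langle x\cup \tfrac12p_1,[N]\rangle=d$. This is nonzero mod $2$ and mod $3$, so both $\mm{Sq}^4\rho_2(x)$ and $\mathcal P^1\rho_3(x)$ are nonzero.

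A subtlety in (3) is torsion: $\tfrac12p_1$ could have a torsion component that alone carries the nonzero mod $2$ or mod $3$ reduction. I would handle this by interpreting the hypothesis $\tfrac12p_1\equiv 1,5\pmod 6$ as a statement about the image in $H^4(N)$ modulo torsion (via divisibility). Alternatively, I would check whether torsion in $H^4$ pairs to zero integrally, which it does, so the reduction argument on the free part suffices.
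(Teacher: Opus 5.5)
Your proposal is correct and is essentially the paper's own argument. You use surjectivity of $\rho_2,\rho_3$ in degree $n-4$, the Wu formula with $v_4=w_4\equiv\frac12p_1(N)\pmod 2$ and the first mod $3$ Wu class equal to $\rho_3(p_1(N))$ up to sign, and the unimodular pairing of $H^4(N)$ with $H^{n-4}(N)$ modulo torsion to choose $x$. In (3), taking $x$ dual to the primitive $u$ with $\frac12p_1(N)=du$, $\gcd(d,6)=1$, is a tidier replacement for the paper's case analysis on two coefficients $k_1,k_2$ with a B\'ezout combination.

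Your closing worry about torsion is moot. Since $H_3(N)=0$, the group $H^4(N)\cong\mathrm{Hom}(H_4(N),\mathbb{Z})$ is free, which is why the paper writes $H^4(N)\cong\mathbb{Z}^d$.
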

\begin{proof}
By the argument for Lemma \ref{Sq2onN}, the $\rho_2$ and $\rho_3$ are surjective in degree $n-4$. 

 If ${p_1(N)}\equiv 0\pmod 3$, the mod-$3$ Wu class satisfies $$\mathrm v_1(N)\equiv p_1(N)\pmod 3\in H^4(N;\mb{Z}_3)$$ 
 and hence vanishes \cite[pp.229]{MilnorStasheff}. By the Wu formula, we finish the proof.

Let $\mathfrak X=\frac{1}{2} p_1(N)\in H^4(N)$. 

It is well-known that $w_4(N)\equiv \mathfrak X\pmod 2\in H^4(N;\mb{Z}_2)$. Assume $w_4(N)\ne 0$. By  Poincar\'e duality, there exists a generator $x\in H^{n-4}(N)$ such that $$0\ne \rho_2(x)\cup w_4(N)\in H^n(N;\mb{Z}_2).$$ 
The mod-2 Wu class satisfies  $v_4(N)=w_4(N)\in H^4(N;\mb{Z}_2)$ \cite[pp.132]{MilnorStasheff}.
By the Wu formula, $\mm{Sq}^4\rho_2(x)=\rho_2(x)\cup v_4(N)\ne 0$.

Assume $\mathfrak X\equiv \text{1 or 5}\pmod 6$. Since $N$ is $3$-connected, $H^4(N)\cong\mb{Z}^d$ with $d\ge 1$. Let $\{a_i\}_{i=1}^d$ be a basis for $H^4(N)$, and let $[N]\in H_n(N)$ denote the fundamental class of $N$. By Poincar\'e duality, the torsion free part of $H^{n-4}(N)$ admits a basis $\{b_i\}_{i=1}^d$ satisfying $\langle a_i\cup b_j, [N]\rangle=\delta_{ij}$ where $\delta_{ij}$ is the Kronecker delta.

Write $\mathfrak X=\sum_{i=1}^dk_ia_i$ for $k_i\in \mb{Z}$. Since $\mathfrak X\equiv \text{1 or 5}\pmod 6$, we may assume $k_1\equiv 1 \pmod 2$ and $k_2\not\equiv 0 \pmod 3$.

If $k_1\not\equiv 0 \pmod 3$, set $x= b_1\in H^{n-4}(N)$. If $k_2\equiv 1 \pmod 2$, set $x= b_2\in H^{n-4}(N)$.  
 
If $k_1\equiv 0 \pmod 3$ and $k_2\equiv 0 \pmod 2$,
let $g=\mm{gcd}(k_1,k_2)$. Then $\mm{gcd}( g,6)=1$, and there exist $s_1,s_2\in \mb{Z}$ with $\mm{gcd}(s_1,s_2)=1$ such that $s_1k_1+s_2k_2=g$.
Set $x=s_1b_1+s_2b_2$. This element satisfies: \begin{enumerate}
 	\item $x=s_1b_1+s_2b_2$ denotes a generator of $H^{n-4}(N)$;
 	\item $x\equiv 1\pmod 2$ and $x\not\equiv 0\pmod 3$.
 \end{enumerate}
 
 In all cases above, combining the mod-$2$ and mod-$3$ Wu classes with the Wu formula yields $\mm{Sq}^4(\rho_2 x)\ne 0$ and $\mathcal P^1(\rho_3 x)\ne 0$.
\end{proof}


\section{Proof of Theorem \ref{mainth1}}\label{Proof1}

Let $n\ge 5$.
Let $N$ be a smooth simply connected closed $n$-manifold, $N^{(k)}$ be the $k$-skeleton of $N$, $N_{k+1}$ be the cofiber as follows
\begin{equation}
	N^{(k)}\stackrel{}{\to} N\stackrel{}{\longrightarrow }N_{k+1} \label{cofskel}
\end{equation}

Consider the sphere bundle of a rank-$(n-1)$ vector bundle $\xi$
\begin{equation}
	S^{n-2}\to M\stackrel{\pi}{\to} N \label{fibcod2}
\end{equation}
\begin{lemma}\label{fibinN3}
	If $w_2(\xi)=0$, then there exists an $S^{n-2}$-bundle over $N_3$ that fits into a bundle morphism
	 \[
\xymatrix@C=.9cm{
S^{n-2}\ar@{=}[d]^{}\ar[r]^{i}&M\ar[d]^{q}\ar[r]^-{\pi}&N\ar[d]\\
S^{n-2}\ar[r]^{\mm{i}}&E\ar[r]^-{\bar \pi} & N_3
}
\]
where $N_3$ is the cofiber defined in \eqref{cofskel}.
\end{lemma}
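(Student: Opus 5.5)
The plan is to show that the classifying map of $\xi$ factors, up to homotopy, through the quotient $N\to N_3$, and then to take $E$ to be the sphere bundle of the resulting bundle over $N_3$. The factorization will come from killing the classifying map on the $2$-skeleton.

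Let $f:N\to \mm{BO}(n-1)$ classify $\xi$. Since $N$ is simply connected, $w_1(\xi)=0$, so $\xi$ is orientable and $f$ lifts to $\mm{BSO}(n-1)$. Choose a CW structure on $N$ with a single $0$-cell and no $1$-cells; this is possible because $N$ is simply connected and $n\ge 5$, by handle cancellation (Smale). Then $N^{(2)}\simeq\bigvee S^2$, and $N_3=N/N^{(2)}$.

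\textbf{Step 1: $f|_{N^{(2)}}$ is null-homotopic.} Since $n-1\ge 4$, we have $\pi_2(\mm{BSO}(n-1))\cong\pi_1(SO(n-1))\cong\mb{Z}/2$. On each wedge summand $S^2$, the restricted bundle is detected by $w_2$, which is the generator of $H^2(\mm{BSO}(n-1);\mb{Z}/2)\cong\mb{Z}/2$. By naturality, $w_2(\xi|_{N^{(2)}})$ is the restriction of $w_2(\xi)=0$. Hence every summand is classified by the trivial element, and $f|_{N^{(2)}}\simeq *$.

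\textbf{Step 2: extend over the cofiber.} The inclusion $N^{(2)}\to N$ is a cofibration. Therefore the cofiber sequence $N^{(2)}\to N\to N_3$ gives an exact sequence of pointed sets
\[
[N_3,\mm{BSO}(n-1)]\to[N,\mm{BSO}(n-1)]\to[N^{(2)},\mm{BSO}(n-1)].
\]
By Step 1, $f$ restricts to the trivial class. Exactness then gives a map $g:N_3\to \mm{BSO}(n-1)$ with $g\circ c\simeq f$, where $c:N\to N_3$ is the quotient map. Concretely, one deforms $f$ by the homotopy extension property so that it is constant on $N^{(2)}$, and then it factors through $N_3$.

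\textbf{Step 3: pull back.} Let $\eta=g^*\gamma_{n-1}$ and let $E=S(\eta)$ with projection $\bar\pi$. Then $c^*\eta\cong f^*\gamma_{n-1}\cong\xi$. The sphere bundle of the pullback gives a pullback square: $M\cong S(c^*\eta)\to S(\eta)=E$ covers $c$. Call this map $q$. It is a fibrewise homeomorphism, so it restricts to the identity on the fibre $S^{n-2}$ over corresponding basepoints. This gives the required diagram.

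The only genuinely delicate point is Step 1. It needs the identification of $\pi_2(\mm{BSO}(n-1))$ via $w_2$, and it needs $N^{(2)}$ to be a wedge of $2$-spheres. If one prefers not to invoke handle cancellation, an alternative is available: any CW structure with one $0$-cell suffices, because the obstruction to a null-homotopy on the $2$-skeleton lies in $H^2(N;\pi_1(SO(n-1)))=H^2(N;\mb{Z}/2)$ and equals $w_2(\xi)$. One may then replace $N^{(2)}$ by any subcomplex $A$ for which $N_3$ is taken to be $N/A$.
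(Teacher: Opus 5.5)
Your proof is correct and follows essentially the same route as the paper: show the classifying map of $\xi$ is null-homotopic on $N^{(2)}$, use exactness of $[N_3,-]\to[N,-]\to[N^{(2)},-]$ to factor it through $N\to N_3$, and pull back the sphere bundle. The only difference is that the paper lifts $\xi$ to $\mathrm{BSpin}_{n-1}$, which is $3$-connected, so nullity on the $2$-skeleton is automatic for any CW structure. That makes the handle-cancellation step and the $w_2$-detection on $\pi_2(\mathrm{BSO}(n-1))$ unnecessary; your obstruction-theoretic alternative is the same observation in different language.
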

\begin{proof}
	If  $w_2(\xi)=0$, then $\xi$ admits a spin structure. Since $\mm{BSpin}_{n-1}$ is $3$-connected, applying the functor $[-,\mm{BSpin}_{n-1}]$ to the cofibration \eqref{cofskel} with $k=2$ yields the result.
\end{proof}

Next, we consider the homotopy type of $N_3$. 
\begin{lemma}\label{cellofN3}
	If $N$ is non-spin, then $N_3$ is $2$-connected and has the homotopy type 
$N^{(n-2)}_3\cup_\alpha e^n$, with the following properties: 
\begin{enumerate}
	\item $N^{(n-2)}_3$ is the $(n-2)$-skeleton of 
	$N_3$ and contains at least one $(n-2)$-cell.	\item $\alpha:S^{n-1}\to N^{(n-2)}_3$ is the attaching map of the top cell.
		\item There exists a generator $y\in H^{n-2}(N_3)$ such that $$0\ne \mm{Sq}^2\rho_2 (y)\in H^n(N_3;\mb{Z}_2).$$
\end{enumerate}
\end{lemma}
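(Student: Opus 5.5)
The plan is to build $N_3$ from a minimal cell structure on $N$ and then transfer the nonvanishing $\mm{Sq}^2$ from Lemma \ref{Sq2onN} along the quotient map $N\to N_3$.

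\textbf{Cell structure.} Since $N$ is simply connected and $n\ge 5$, I would give $N$ a CW structure with a single $0$-cell, no $1$-cells and a single top $n$-cell. This comes from a minimal handle decomposition (Smale), or more cheaply from a minimal-cell CW replacement of a simply connected finite complex. Then $N^{(2)}$ is a wedge of $2$-spheres, and $N_3=N/N^{(2)}$ has one $0$-cell and otherwise only cells in dimensions $3,\dots,n$, with one $n$-cell. Hence $N_3$ is $2$-connected. I would take its $(n-2)$-skeleton to be the image of $N^{(n-2)}$; these are the cells of dimension $3,\dots,n-2$. The obstacle is the $(n-1)$-cells. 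By Poincar\'e duality $H_{n-1}(N)\cong H^1(N)=0$, and $H_{n-2}(N)\cong H^2(N)$ is free, so $H^{n-1}(N)\cong H_1(N)=0$ as well. Using a minimal cell structure in the sense of homology decompositions, every cell corresponds to a generator or a torsion relation. With $H_{n-1}=0$ and no torsion in $H_{n-2}$, there are no $(n-1)$-cells. Here I need $n-1\ge 4$, so that we are in the simply connected range where minimal structures exist and the quotient does not interfere. Alternatively, I would dualize: $N$ has a handle decomposition whose dual has no $1$-handles, so $N$ has no $(n-1)$-handles. Either way, $N_3\simeq N_3^{(n-2)}\cup_\alpha e^n$, which gives properties (1) and (2), provided $N_3^{(n-2)}$ contains an $(n-2)$-cell. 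That last point follows from (3), since it needs $H^{n-2}(N_3)\neq 0$.

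\textbf{Cohomology.} Let $q:N\to N_3$ be the quotient map. From the long exact sequence of the pair $(N,N^{(2)})$, $q^*:H^k(N_3)\to H^k(N)$ is an isomorphism for $k\ge 4$. It is also an isomorphism for $k\ge 3$, because $H^2(N^{(2)})\to$ the next term behaves well, though I only need large $k$. Since $n-2\ge 3$, and in fact $n-2\ge 4$ if $n\ge 6$, I would check that the degrees $n-2$ and $n$ lie in the isomorphism range with both $\mb{Z}$ and $\mb{Z}_2$ coefficients. For $n=5$, degree $3$ requires $H^3(N,N^{(2)})\to H^3(N)$ to be onto, which holds because $H^3(N^{(2)})=0$. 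For injectivity I would note that $H^2(N)\to H^2(N^{(2)})$ is injective, since $N^{(3)}/N^{(2)}$ provides the relations. So $q^*$ is an isomorphism in degrees $\ge 3$.

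\textbf{Transferring $\mm{Sq}^2$.} Take the generator $x\in H^{n-2}(N)$ from Lemma \ref{Sq2onN} and set $y=(q^*)^{-1}(x)$, which is again a generator. By naturality, $q^*\mm{Sq}^2\rho_2(y)=\mm{Sq}^2\rho_2(x)\neq 0$, so $\mm{Sq}^2\rho_2(y)\neq 0$. This proves (3). It also shows $H^{n-2}(N_3)\neq 0$, so $N_3^{(n-2)}$ contains an $(n-2)$-cell.

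\textbf{Main obstacle.} The hardest step is justifying the absence of $(n-1)$-cells. I would rely on the minimal cell structure, or homology decomposition, of simply connected complexes, with $H_{n-1}(N)=0$ and $H_{n-2}(N)$ torsion-free.
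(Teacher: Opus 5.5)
Your route is essentially the paper's. You rule out $(n-1)$-cells with a Wall-type minimal cell structure, and you get (3) by pulling the class of Lemma \ref{Sq2onN} back along $q\colon N\to N_3$, using the exact sequence of $(N,N^{(2)})$ and naturality of $\mathrm{Sq}^2$. The one difference is where the minimal structure lives: you put it on $N$, using $H_{n-1}(N)=0$ and $H_{n-2}(N)\cong H^2(N)$ free, whereas the paper computes the (co)homology of $N_3$ and applies \cite[Proposition 4.1]{Wall1965} to $N_3$ directly. For $n\ge 6$ everything you wrote is correct, since $q^*$ is an isomorphism in degrees $\ge 4$.

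One sub-claim is wrong: that $q^*$ is injective on $H^3$ when $n=5$. Consider the exact sequence $H^2(N)\to H^2(N^{(2)})\xrightarrow{\delta}H^3(N_3)\xrightarrow{q^*}H^3(N)\to 0$. Injectivity of $q^*$ needs $H^2(N)\to H^2(N^{(2)})$ to be \emph{surjective}, not injective, and this fails whenever $H_2(N)$ has torsion. For example, the Wu manifold $SU(3)/SO(3)$ is non-spin with $H_2=\mathbb{Z}/2$; for it $N_3\simeq S^3\cup e^5$ and $q^*\colon\mathbb{Z}\to\mathbb{Z}/2$ in degree $3$. So $(q^*)^{-1}(x)$ is not defined there.

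The fix is short. In degree $n-2$, $q^*$ is always onto, and it is an isomorphism on $H^n(-;\mathbb{Z}_2)$. Hence any preimage $y'$ of $x$ has $\mathrm{Sq}^2\rho_2(y')\neq 0$. Since $N_3$ is $2$-connected, $H^3(N_3)$ is free. The nonzero homomorphism $\mathrm{Sq}^2\rho_2\colon H^3(N_3)\to H^5(N_3;\mathbb{Z}_2)$ is therefore nonzero on some basis element, and that element serves as the generator $y$. The paper's proof asserts $H^i(N_3)\cong H^i(N)$ for $n-2\le i\le n$, so it glosses over exactly the same point.
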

\begin{proof}
	The cofibration \eqref{cofskel} for $k=2$ implies  $N_3$ has no cells in dimensions $i=1,2$, so $N_3$ is $2$-connected. The following induced cohomology exact sequence
	 \begin{equation}
	H^{i-1}(N^{(2)})\to H^i(N_{3})\stackrel{p^\ast}{\to} H^i(N)\to H^i(N^{(2)}) \label{cohoexact}
\end{equation}
	  gives $H^{i}(N_3)\cong H^{i}(N)$ for $n-2\le i\le n$. By the universal coefficient theorem,  $H_{n-1}(N_3)=0$ and $H_{n}(N_3)=\mb{Z}$. Since $N$ is simply connected and non-spin, we have $H_2(N)\ne 0$. Poincar\'e duality then gives $H^{n-2}(N)\ne 0$, so $H^{n-2}(N_3)\ne 0$. Thus, $N_3^{(n-2)}$ contains at least one $(n-2)$-cell. The cell structure, i.e. Items (1) (2), now follows from \cite[Proposition 4.1]{Wall1965}.
	  
	  Finally, using Lemma \ref{Sq2onN} and the commutative diagram
\[ 
\xymatrix@C=1cm{
H^{n-2}(N_3)\ar[r]^-{\rho_2}\ar[d]^-{}_-{\cong}&H^{n-2}(N_3;\mb{Z}_2)\ar[r]^-{\mm{Sq}^2}\ar[d]^-{}&H^{n}(N_3;\mb{Z}_2) \ar[d]^{}_-{}\\
H^{n-2}(N)\ar[r]^-{\rho_2}&H^{n-2}(N;\mb{Z}_2)\ar[r]^-{\mm{Sq}^2}&H^{n}(N;\mb{Z}_2)
}
\]
 we complete the proof for Item (3).
	 \end{proof}

\begin{lemma}\label{homotogroup}
	 The homotopy group $\pi_{n-1}(N_3^{(n-2)}\vee S^{n-2})$ is isomorphic to $\pi_{n-1}(N_3^{(n-2)})\oplus \pi_{n-1}( S^{n-2})$ where $\pi_{n-1}( S^{n-2})\cong \mb{Z}_2$.
\end{lemma}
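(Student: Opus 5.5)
We compute the homotopy group with the Hilton--Milnor theorem (or, equivalently, the standard splitting of homotopy groups of a wedge in the stable-ish range). Write $A=N_3^{(n-2)}$ and $B=S^{n-2}$. By Lemma \ref{cellofN3}, $N_3$ is $2$-connected, and hence so is its skeleton $A$ (we may take a minimal cell structure with no cells in dimensions $1,2$). The sphere $B$ is $(n-3)$-connected.

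The key step is the fibration sequence
\[
\Sigma \Omega A\wedge \Omega B\longrightarrow A\vee B\longrightarrow A\times B,
\]
which is Ganea's description of the homotopy fibre of the inclusion of the wedge into the product. The inclusion $A\vee B\to A\times B$ has a retraction-type splitting on homotopy groups: $\pi_k(A\times B)\cong\pi_k(A)\oplus\pi_k(B)$, and the maps $A\to A\vee B$, $B\to A\vee B$ give sections. So the long exact sequence splits as
\[
\pi_{k}(A\vee B)\cong \pi_k(A)\oplus\pi_k(B)\oplus \pi_k(\Sigma\Omega A\wedge\Omega B).
\]
It remains to show that the fibre term vanishes for $k=n-1$.

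For the connectivity count: $\Omega A$ is $1$-connected and $\Omega B$ is $(n-4)$-connected. The smash product of an $a$-connected and a $b$-connected CW complex is $(a+b+1)$-connected, so $\Omega A\wedge\Omega B$ is $(1+(n-4)+1)=(n-2)$-connected. Its suspension is therefore $(n-1)$-connected, so $\pi_{n-1}$ and $\pi_{n-2}$ of the fibre vanish. This gives $\pi_{n-1}(A\vee B)\cong\pi_{n-1}(A)\oplus\pi_{n-1}(S^{n-2})$.

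Finally, for $n\ge5$ we have $n-2\ge3$, so $\pi_{n-1}(S^{n-2})=\pi_{m+1}(S^m)$ with $m\ge 3$ lies in the stable range. It is therefore $\mathbb Z_2$, generated by the suspended Hopf map $\eta$.

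The only delicate point is the connectivity bookkeeping, namely that $A$ is genuinely $2$-connected. This follows from Lemma \ref{cellofN3}, since $N_3$ has no $1$- or $2$-cells. Alternatively, one can argue via the relative Hurewicz/Blakers--Massey theorem applied to the pair $(A\times B, A\vee B)$. That pair is $(3+(n-2)-1)=n$-connected, since $A$ is $2$-connected and $B$ is $(n-3)$-connected, so the inclusion induces an isomorphism on $\pi_{n-1}$, which gives the same conclusion.
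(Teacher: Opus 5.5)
Your proof is correct. Your closing alternative is exactly the paper's proof: the pair $(A\times B, A\vee B)$ is $n$-connected because $A$ is $2$-connected and $B$ is $(n-3)$-connected, and the long exact sequence of the pair splits. Your Ganea-fibre version is the same connectivity estimate phrased through the fibration $\Sigma\Omega A\wedge\Omega B\to A\vee B\to A\times B$ rather than the pair, and you also make explicit why $\pi_{n-1}(S^{n-2})\cong\mathbb{Z}_2$ (namely $n-2\ge 3$), which the paper leaves implicit.
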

\begin{proof}
	By Lemma \ref{cellofN3}, $N_3^{(n-2)}$ is $2$-connected. The relative Hurewicz theorem implies
	 $$\pi_k(N_3^{(n-2)}\times S^{n-2},N_3^{(n-2)}\vee S^{n-2})=0$$
	 for $k\le n$. Recall that  the homotopy long exact sequence for the pair $(N_3^{(n-2)}\times S^{n-2},N_3^{(n-2)}\vee S^{n-2})$ splits. This finishes the proof.
\end{proof}

We now introduce a key construction:
\begin{lemma}\label{coaction}
	If $N$ is non-spin, there exists a map 
	$$\phi :N^{(n-2)}_3\to N^{(n-2)}_3\vee S^{n-2}$$
	 such that the homotopy class $\phi_\ast( \alpha)\in \pi_{n-1}(N^{(n-2)}_3\vee S^{n-2})$ is  represented by the following components
	 $$(\alpha,1)\in \pi_{n-1}(N^{(n-2)}_3)\oplus \pi_{n-1}(S^{n-2})$$
	 where $\alpha:S^{n-1}\to N^{(n-2)}_3$ is the attaching map of the top cell of $N_3$.
\end{lemma}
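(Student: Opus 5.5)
We build $\phi$ as a coaction map, in the spirit of \cite[2.45]{Sw}: it pinches onto the sphere along a cohomology class, then uses the Hopf invariant (here $\mm{Sq}^2$) of the top cell to read off the $\pi_{n-1}(S^{n-2})$-component of $\phi_\ast(\alpha)$.

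\textbf{Step 1: constructing $\phi$.} Take the generator $y\in H^{n-2}(N_3)$ from Lemma \ref{cellofN3}(3) and restrict it to $N_3^{(n-2)}$. Since $N_3^{(n-2)}$ is an $(n-2)$-dimensional CW complex, $H^{n-2}(N_3^{(n-2)})\cong [N_3^{(n-2)},K(\mb{Z},n-2)]\cong[N_3^{(n-2)},S^{n-2}]$. The second isomorphism holds because $S^{n-2}\to K(\mb{Z},n-2)$ is $(n-1)$-connected. So $y$ is represented by a map $f:N_3^{(n-2)}\to S^{n-2}$.

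Next, choose the cell structure so that the skeleton $N_3^{(n-3)}$ is the $(n-3)$-skeleton. Then $N_3^{(n-2)}/N_3^{(n-3)}$ is a wedge of $(n-2)$-spheres, and the pinch map gives a coaction
\[
N_3^{(n-2)}\to N_3^{(n-2)}\vee \textstyle\bigvee S^{n-2}.
\]
Compose this with the identity on the first factor and with the map $\bigvee S^{n-2}\to S^{n-2}$ determined by the restriction of $f$ to the cells. Concretely, $\phi$ is the comultiplication-type map $N_3^{(n-2)}\to N_3^{(n-2)}\vee S^{n-2}$ obtained from the pinch followed by $(\mathrm{id}\vee f')$. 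It satisfies $p_1\circ\phi\simeq \mathrm{id}$ and $p_2\circ\phi\simeq f$, where $p_1,p_2$ are the two collapse maps.

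\textbf{Step 2: the first component.} By Lemma \ref{homotogroup}, $\phi_\ast(\alpha)$ is determined by its two components $(p_1\phi\alpha,\ p_2\phi\alpha)$. The first component is $\alpha$ because $p_1\circ\phi\simeq\mathrm{id}$. The second component is $f\circ\alpha\in\pi_{n-1}(S^{n-2})\cong\mb{Z}_2$, generated by $\eta$ (we have $n\ge5$, so this is the stable range).

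\textbf{Step 3: the second component is nonzero.} Consider the cofiber of $f\circ\alpha$, namely $C=S^{n-2}\cup_{f\alpha}e^n$. The map $f$ extends to $\bar f:N_3\to C$, which is the identity on the top cell. Then $\bar f^\ast$ sends the generator $u\in H^{n-2}(C)$ to $y$ and is an isomorphism on $H^n$. Naturality of $\mm{Sq}^2$ gives
\[
\bar f^\ast\mm{Sq}^2\rho_2(u)=\mm{Sq}^2\rho_2(y)\ne0
\]
by Lemma \ref{cellofN3}(3). Hence $\mm{Sq}^2$ is nonzero on $C$. This forces $f\alpha=\eta$, the nonzero element, which is $1\in\mb{Z}_2$.

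\textbf{Main obstacle.} The delicate part is Step 1: making $\phi$ rigorous so that it simultaneously retracts to the identity on the first factor and realizes $y$ on the second, compatibly with extending over the top cell. If the pinch construction is awkward, I would instead define $\phi$ as the composite
\[
N_3^{(n-2)}\xrightarrow{\Delta}N_3^{(n-2)}\times N_3^{(n-2)}\xrightarrow{\mathrm{id}\times f}N_3^{(n-2)}\times S^{n-2}.
\]
I would then compress it into the wedge. This compression is possible because the pair $(N_3^{(n-2)}\times S^{n-2},\,N_3^{(n-2)}\vee S^{n-2})$ is $(n)$-connected, as in the proof of Lemma \ref{homotogroup}, while $N_3^{(n-2)}$ has dimension $n-2$. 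The compressed map automatically has the two projections required in Step 2.
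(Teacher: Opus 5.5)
Your proposal is correct and follows the paper's argument. Both build a coaction-type map $\phi$ with $p_1\circ\phi\simeq\mathrm{id}$ and second component realizing $y$, extend $p_2\circ\phi$ over the top cell to a map $N_3\to S^{n-2}\cup_{\gamma}e^n$, and use $\mathrm{Sq}^2\rho_2(y)\neq 0$ to force $\gamma=\eta$. The only difference is how $\phi$ is built: the paper uses Wall's theorem to choose a cell structure with a single $(n-2)$-cell dual to $\bar y$, pinches only that cell, and then corrects the first factor by a homotopy inverse $g$, whereas you represent $\bar y$ by a map $f\colon N_3^{(n-2)}\to S^{n-2}$ and get $p_1\circ\phi\simeq\mathrm{id}$ and $p_2\circ\phi\simeq f$ directly (most cleanly via your diagonal-and-compress construction), which avoids the cell-choosing step.
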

\begin{proof}
By Lemma \ref{cellofN3}, $H^{k}(N_3)\cong H^{k}(N_3^{(n-2)})$ for $k\le n-2$. Let $\bar y\in H^{n-2}(N_3^{(n-2)})$ be the class corresponding to the $y\in H^{n-2}(N_3)$ satisfying $0\ne \mm{Sq}^2\rho_2(y)\in H^n(N_3;\mb{Z}_2)$. 

By \cite[Theorem G]{Wall1965}, $N_3^{(n-2)}$ has the following homotopy type
 $$N_3^{(n-2)}\simeq N_3^{(n-3)}\cup e^{n-2}_1\cup \cdots \cup e^{n-2}_d$$
 where the cell $e^{n-2}_1$ represents $\bar y$. Note that $e^{n-2}$ is homeomorphic to $$S^{n-3}\times [0,1]/S^{n-3}\times 0.$$
 By collapsing the $S^{n-3}\times \frac{1}{2}\subset e^{n-2}_1$ to a point, we have the map
 $$\bar \phi:N_3^{(n-2)}\to N_3^{(n-2)}\vee S^{n-2}.$$
Let $p_1$ and $p_2$ be the projections onto the two wedge summands. The composition $p_1\circ \bar \phi$ induces isomorphisms on $H_k$ for $k\ge 0$, thus is a homotopy equivalence. Let $g$ be a homotopy inverse of $p_1\circ \bar \phi$. Define
$$\phi: N_3^{(n-2)}\stackrel{\bar \phi}{\to}N_3^{(n-2)}\vee S^{n-2}\stackrel{g\vee \mm{id}}{\longrightarrow}N_3^{(n-2)}\vee S^{n-2}$$
By Lemma \ref{homotogroup}, 
$$\phi_\ast(\alpha)=(\alpha,\gamma)\in \pi_{n-1}(N_3^{(n-2)}\vee S^{n-2})$$
for some $\gamma\in \pi_{n-1}(S^{n-2})$.

The composition $p_2\circ \phi$ extends to a map
$$T:N_3\simeq N_3^{(n-2)}\cup_\alpha e^n\to Q=S^{n-2}\cup_\gamma e^n $$
 inducing an isomorphism on $H^n$. Note that the $y\in H^{n-2}(N_3)$ lies in the image of $T^\ast:H^{n-2}(Q)\to H^{n-2}(N_3)$. Since $\mm{Sq}^2\rho_2(y)\ne 0$, the operation $\mm{Sq}^2:H^{n-2}(Q;\mb{Z}_2)\to H^{n}(Q;\mb{Z}_2)$ is nontrivial. Therefore, $\gamma=1\in \pi_{n-1}(S^{n-2})$. This completes the proof.
\end{proof}

 We now analyze the cell structure of the total space $E$ of the $S^{n-2}$-bundle over $N_3$ (see Lemma \ref{fibinN3}).
\begin{lemma}\label{cellofEn}
	(1) The $n$-skeleton of $E$ has the homotopy type $$E^{(n)}\simeq (N_3^{(n-2)}\vee S^{n-2})\cup_{ f} e^n$$
	where $f$ denotes the attaching map.
	
	(2) The map $f$ decomposes as
	$$(\alpha,\beta)\in \pi_{n-1}(N_3^{(n-2)}\vee S^{n-2})\cong \pi_{n-1}(N_3^{(n-2)})\oplus \pi_{n-1}( S^{n-2})$$ 
	where $\alpha$ is the attaching map of the $n$-cell of $N_3$.
\end{lemma}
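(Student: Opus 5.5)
We want the cell structure of the total space $E$ of the $S^{n-2}$-bundle $\bar\pi:E\to N_3$ from Lemma \ref{fibinN3}. The plan is to build $E$ cell by cell from the CW structure $N_3\simeq N_3^{(n-2)}\cup_\alpha e^n$ of Lemma \ref{cellofN3}, using the standard product decomposition of a sphere bundle over a CW complex. Over each cell $e^k$ of the base, the bundle is trivial, so $\bar\pi^{-1}(e^k)\cong e^k\times S^{n-2}=e^k\times(e^0\cup e^{n-2})$. This contributes a cell $e^k$ and a cell $e^{k+n-2}$. Since $N_3$ is $2$-connected, every positive-dimensional cell $e^k$ has $k\ge 3$, so $e^{k+n-2}$ has dimension at least $n+1$. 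Hence the $n$-skeleton of $E$ consists of the fibre sphere $S^{n-2}$ over the base point, the cells of $N_3^{(n-2)}$, and one $n$-cell lying over the top cell of $N_3$.

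For item (1), I first identify the $(n-2)$-skeleton. The bundle $\bar\pi$ restricted to $N_3^{(n-2)}$ is classified by a map into $\mathrm{BSpin}_{n-1}$, or $\mathrm{BSO}_{n-1}$. Because $N_3^{(n-2)}$ has dimension $n-2$ and the fibre is $S^{n-2}$, obstruction theory gives a section over $N_3^{(n-2)}$. The obstructions to a section lie in $H^{k}(N_3^{(n-2)};\pi_{k-1}(S^{n-2}))$, which vanish for $k\le n-2$. This section $s$ is used to include $N_3^{(n-2)}$ into $E$.

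Together with the fibre inclusion, the section gives a map $N_3^{(n-2)}\vee S^{n-2}\to E$. I check, via the relative Serre spectral sequence or the cell count above, that this map is an isomorphism on homology through degree $n-2$ and identifies the $(n-1)$-skeleton of $E$ up to homotopy. There are no $(n-1)$-cells, because $N_3$ has none in dimension $n-1$ and the fibre contributes only cells of dimension $\ge n+1$ in that range. Attaching the single remaining $n$-cell then gives $E^{(n)}\simeq(N_3^{(n-2)}\vee S^{n-2})\cup_f e^n$. The splitting of $\pi_{n-1}$ is Lemma \ref{homotogroup}.

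For item (2), write $f=(f_1,\beta)$ with respect to the splitting of Lemma \ref{homotogroup}. To find $f_1$, compose with the map $E^{(n)}\to N_3$ given by $\bar\pi$. Its restriction to $N_3^{(n-2)}\vee S^{n-2}$ is homotopic to the identity on $N_3^{(n-2)}$, since $\bar\pi\circ s=\mathrm{id}$, and is constant on $S^{n-2}$. The $n$-cell of $E$ maps with degree one onto the top cell of $N_3$, because it is the cell $e^n\times e^0$ over the top cell. So the projection $p_1\colon N_3^{(n-2)}\vee S^{n-2}\to N_3^{(n-2)}$ carries $f$ to an attaching map of the top cell of $N_3$ within $N_3^{(n-2)}$, that is, $(p_1)_\ast f=\alpha$, possibly after an orientation choice and a self-equivalence of $N_3^{(n-2)}$ that we absorb. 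Hence $f_1=\alpha$. The component $\beta\in\pi_{n-1}(S^{n-2})$ is whatever remains, and the statement places no constraint on it.

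The main obstacle is making the identification of the $n$-skeleton honest. The section exists only over $N_3^{(n-2)}$ and not necessarily over the top cell. So I need to check that the characteristic map of the $n$-cell $e^n\times\{\ast\}$ of $E$ (using the local trivialization over the top cell) projects exactly to $\alpha$ and not to $\alpha$ composed with something nontrivial. I would handle this by choosing the trivialization over $e^n$ compatible with the section on the boundary. This is possible because over $D^n$ the bundle is trivial, and any two trivializations differ by a map $D^n\to SO(n-1)$, which is homotopically trivial. The boundary difference between the chosen section and the trivialization only affects the $S^{n-2}$-component of $f$. Therefore that discrepancy is absorbed into $\beta$.
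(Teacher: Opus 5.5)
Your proposal is correct and follows essentially the paper's route. A section over $N_3^{(n-2)}$ together with the fibre inclusion identifies the $(n-1)$-skeleton of $E$ with $N_3^{(n-2)}\vee S^{n-2}$, a single $n$-cell remains, and composing with $\bar\pi$ (which collapses the fibre) forces the first component of $f$ to be $\alpha$. The only real difference is that you count cells using local trivializations over the cells of $N_3$, whereas the paper reads them off from the Gysin isomorphism $H^\ast(E)\cong H^\ast(S^{n-2}\times N_3)$.

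Your final paragraph, and the hedge about absorbing a self-equivalence, are unnecessary. The trivialization over the top cell covers its characteristic map, so $\bar\pi$ composed with the attaching map of $e^n\times\{\ast\}$ is literally $\alpha$, and $\bar\pi$ restricted to the wedge is exactly the projection. You should also drop the claim that one can choose a trivialization over $D^n$ ``compatible with the section on the boundary.'' Such a trivialization would extend the section over the top cell and force $\beta=0$, so it does not exist in general.
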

\begin{proof}
	By Lemma \ref{cellofN3}, we have $H^{s}(N_3)=0$ for $s=1,2$, $n-1$ and $H^n(N_3)=\mb{Z}$. Recall the bundle
	$S^{n-2}\to E\stackrel{\bar \pi}{\to} N_3.$
	Since the Euler class of the bundle is zero, there exists a section $N_3^{(n-2)}\to E$. Furthermore, 
	applying the Gysin sequence, we have a ring isomorphism
	\begin{equation}
	H^\ast(E)\cong H^\ast(S^{n-2}\times N_3)\label{cohoiso}.
	\end{equation}
	Therefore, $E^{(n)}$ has the homotopy type
	$$(N_3^{(n-2)}\cup_b e^{n-2})\cup_{f} e^n$$ 
	where the cell $e^{n-2}$ provides a generator of $ H^{n-2}(E)$ inherited from $H^{n-2}(S^{n-2})$; the cell $e^{n}$ provides a generator of $ H^n(E)$ inherited from $H^n(N_3)$. By cellular theorem, $\bar{\pi}$ maps $N_3^{(n-2)}\cup_b e^{n-2}$ to $N_3^{(n-2)}$. We may further assume $\bar{\pi}|_{N_3^{(n-2)}}:N_3^{(n-2)}\to N_3^{(n-2)}$ is the identity. This implies that the attaching map $b$ is null-homotopic. Consequently, it follows that Item (1) of the lemma.

The restriction	 
 $$\bar \pi|_{(n)}:E^{(n)}\simeq (N_3^{(n-2)}\vee S^{n-2})\cup_{ f} e^n \to N_3\simeq N_3^{(n-2)}\cup_{\alpha} e^n$$ collapses the $S^{n-2}$ to a point. This forces the attaching map $f$ to split into the two components $(\alpha ,\beta)$ as claimed.	 
\end{proof}	

\begin{lemma}\label{loopspit}
	If $N$ is non-spin, the $S^{n-2}$-bundle over $N_3$ 
$$S^{n-2}\stackrel{\mm{i}}{\to} E\stackrel{\bar \pi}{\to }N_3$$
 splits after looping, i.e., $\Omega E\simeq \Omega S^{n-2}\times \Omega N_3$.
\end{lemma}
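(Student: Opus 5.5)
By the introduction, it suffices to produce a homotopy section $s\colon N_3\to E$ of $\bar\pi$, i.e.\ a map with $\bar\pi\circ s$ a homotopy equivalence. Then $\mu\circ(\Omega \mathrm{i}\times\Omega s)\colon\Omega S^{n-2}\times\Omega N_3\to\Omega E$ induces isomorphisms on all homotopy groups, via the split long exact sequence of the fibration. Since $N_3$ is $n$-dimensional, we may replace $E$ by its $n$-skeleton $E^{(n)}\simeq (N_3^{(n-2)}\vee S^{n-2})\cup_f e^n$ from Lemma~\ref{cellofEn}. The plan is to build a map $N_3\to E^{(n)}\hookrightarrow E$ out of the coaction map $\phi$ of Lemma~\ref{coaction}.

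\textbf{Step 1.} By Lemma~\ref{cellofEn}(2), $f=(\alpha,\beta)$ with $\beta\in\pi_{n-1}(S^{n-2})\cong\mathbb{Z}_2$. Split into the cases $\beta=0$ and $\beta=1$.

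\textbf{Case $\beta=0$.} Here $f=(\alpha,0)$, so the inclusion $N_3^{(n-2)}\to N_3^{(n-2)}\vee S^{n-2}$ sends $\alpha$ to $f$ by Lemma~\ref{homotogroup}. It therefore extends to a map $s\colon N_3=N_3^{(n-2)}\cup_\alpha e^n\to E^{(n)}$.

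\textbf{Case $\beta=1$.} Use $\phi\colon N_3^{(n-2)}\to N_3^{(n-2)}\vee S^{n-2}$. By Lemma~\ref{coaction}, $\phi_*(\alpha)=(\alpha,1)=f$, so $\phi$ extends to a map $s\colon N_3\to E^{(n)}$.

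\textbf{Step 2.} Check that $\bar\pi\circ s$ is a homotopy equivalence. The restriction $\bar\pi|_{E^{(n)}}$ collapses $S^{n-2}$ and is the identity on $N_3^{(n-2)}$. Hence $\bar\pi\circ s$ restricted to $N_3^{(n-2)}$ is the inclusion in Case $\beta=0$, and is $p_1\circ\phi=p_1\circ(g\vee \mathrm{id})\circ\bar\phi\simeq g\circ p_1\circ\bar\phi\simeq\mathrm{id}$ in Case $\beta=1$.

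On the top cell, $s$ is a map of degree one onto the $n$-cell of $E^{(n)}$, which $\bar\pi$ sends with degree one onto the top cell of $N_3$. So $\bar\pi\circ s$ induces an isomorphism on $H_n$, and it is an isomorphism on $H_k$ for $k\le n-2$ by the previous paragraph. Both spaces are simply connected CW complexes with $H_{n-1}=0$, so Whitehead's theorem shows $\bar\pi\circ s$ is a homotopy equivalence.

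The main obstacle is making the top-cell degree computation honest. The extension $s$ over the $n$-cell depends on a choice of null-homotopy, so it is only determined up to an element of $\pi_n(E^{(n)})$, and this could a priori change the degree on $H_n$. I would handle this by reading the degree off the map of cofibres: $N_3/N_3^{(n-2)}\simeq S^n\to E^{(n)}/(N_3^{(n-2)}\vee S^{n-2})\simeq S^n$ is the identity by construction of the extension of an attaching-map identification. This is the standard fact that extending along $\phi_*(\alpha)=f$ gives a cellular map of degree one on the top cells.
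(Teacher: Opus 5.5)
Your proposal is correct and follows the paper's proof essentially verbatim. You split on $\beta\in\pi_{n-1}(S^{n-2})\cong\mathbb{Z}_2$, extend the wedge inclusion when $\beta=0$ and the coaction map $\phi$ (with $\phi_*(\alpha)=(\alpha,1)=f$) when $\beta=1$ to obtain $s\colon N_3\to E^{(n)}\hookrightarrow E$, and conclude that $\bar\pi\circ s$ is a homology equivalence, hence a homotopy equivalence. Your Step 2 ($p_1\circ\phi\simeq\mathrm{id}$ on the $(n-2)$-skeleton plus the degree-one map of cofibres on the top cell) spells out the check the paper only calls ``easy''.
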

\begin{proof}
	 From Lemma \ref{cellofEn}, we have
	 $$E^{(n)}\simeq (N_3^{(n-2)}\vee S^{n-2})\cup_{ f} e^n$$
	where 
	$f=(\alpha,\beta)\in \pi_{n-1}(N_3^{(n-2)})\oplus \pi_{n-1}( S^{n-2})$. 
	
	If $0=\beta\in \pi_{n-1}(S^{n-2})=\mb{Z}_2$, then 
	$$E^{(n)}\simeq (N_3^{(n-2)}\cup_{\alpha} e^n)\vee S^{n-2}\simeq N_3\vee S^{n-2}$$
	Hence we have an inclusion 
	$$s:N_3\to E^{(n)}\hookrightarrow E$$
	such that $\bar \pi\circ s$ is a homotopy equivalence. Thus, the composition
	$$\Omega S^{n-2}\times \Omega N_3\stackrel{\Omega {\mm{i}}\times \Omega s}{\longrightarrow}\Omega E\times \Omega E\stackrel{\mu}{\to}\Omega E $$
	induces isomorphisms on homotopy groups $\pi_s$ for $s\ge 0$ where $\mu$ denotes the product of loop space. Hence it is a homotopy equivalence.
	
	Assume $1=\beta \in \pi_{n-1}(S^{n-2})=\mb{Z}_2$. 
	By Lemma \ref{coaction}, there exists a map $\phi:N_3^{(n-2)}\to N_3^{(n-2)}\vee S^{n-2}$ such that $$\phi \circ \alpha=(\alpha,1)\in \pi_{n-1}(N_3^{(n-2)}\vee S^{n-2})$$
We extend $\phi$ to a map 
	$$s:N_3\simeq N_3^{(n-2)}\cup_\alpha e^n\to E^{(n)}\simeq (N_3^{(n-2)}\vee S^{n-2})\cup_{(\alpha,1)}e^n\hookrightarrow E$$
	It is easy to check that the composition $\bar \pi\circ s$ is a homology equivalence, thus is a homotopy equivalence. Hence, we get $\Omega E\simeq \Omega S^{n-2}\times \Omega N_3$. 
\end{proof}

\begin{proof}[Proof of Theorem \ref{mainth1}]
	By Lemma \ref{fibinN3}, for the $S^{n-2}$-bundle in Theorem \ref{mainth1}, we have the following bundle morphism
	\[
\xymatrix@C=.9cm{
S^{n-2}\ar[d]^{\mm{id}}\ar[r]^{i}&M\ar[r]\ar[d]^{{q}}&N\ar[d]^-{}\\
S^{n-2}\ar[r]^{ \mm{i}}&E\ar[r]^{\bar \pi}&N_3
}
\]
By Lemma \ref{loopspit}, the map $\Omega \mathfrak i$ has a left homotopy inverse
$r:\Omega E\to \Omega S^{n-2}$. Then $r\circ \Omega {q}:\Omega M\to \Omega S^{n-2}$ is a left homotopy inverse of the map $\Omega i$. Therefore, the $S^{n-2}$-bundle 
$$S^{n-2}\stackrel{i}{\to} M\to N$$
 splits after looping to give
$\Omega M\simeq \Omega S^{n-2}\times \Omega N$.
\end{proof}

\section{Proof of Theorem \ref{mainth2}}\label{Proof2}

Let $n\ge 8$. Let $N$ be a smooth $3$-connected closed $n$-manifold, $N^{(k)}$ be the $k$-skeleton of $N$, $N_{k+1}$ be the cofiber as in \eqref{cofskel}.

Consider the sphere bundle of a rank-$(n-3)$ vector bundle $\xi$
\begin{equation}
	S^{n-4}\to M\stackrel{\pi}{\to} N \label{fibcod4}
\end{equation}
\begin{lemma}\label{fibinN5}
	If $\frac{1}{2} p_1(\xi) =0$, there exists an $S^{n-4}$-bundle over $N_5$ fitting into the following bundle morphism
	 \[
\xymatrix@C=.9cm{
S^{n-4}\ar@{=}[d]^{}\ar[r]^{i}&M\ar[d]^{q}\ar[r]^-{\pi}&N\ar[d]\\
S^{n-4}\ar[r]^{\mm{i}}&E\ar[r]^-{\bar \pi} & N_5
}
\]
\end{lemma}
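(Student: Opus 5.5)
The plan is to copy the proof of Lemma \ref{fibinN3}, with the $2$-skeleton replaced by the $4$-skeleton and $\mathrm{BSpin}_{n-1}$ replaced by the $7$-connected cover $\mathrm{BString}_{n-3}=\mathrm{BO}_{n-3}\langle 8\rangle$. It suffices to show that the classifying map $\xi:N\to \mathrm{BO}_{n-3}$ factors up to homotopy through the quotient $N\to N_5$ of the cofibration \eqref{cofskel} with $k=4$. The bundle $E\to N_5$ is then the pullback of the universal sphere bundle along the induced map $N_5\to \mathrm{BO}_{n-3}$. The map $q$ and the bundle morphism are the canonical maps of pullbacks, since $\xi$ is, up to homotopy, the pullback of this bundle along $N\to N_5$. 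The fibres are identified by the identity of $S^{n-4}$.

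The steps are as follows.

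First, because $N$ is $3$-connected, $w_1(\xi)=0$ and $w_2(\xi)=0$. So $\xi$ lifts to $\mathrm{BSpin}_{n-3}$; this lift is unique up to homotopy, since $H^1(N;\mathbb{Z}_2)=0$.

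Second, I would use that $\mathrm{BSpin}_{n-3}$ is $3$-connected and, because $n-3\ge 5$ (this needs $n\ge 8$), has $\pi_4\cong\mathbb{Z}$ with first $k$-invariant the class $X=\frac{1}{2}p_1$. The obstruction to lifting the spin structure to a string structure, i.e.\ through the $4$-connected cover $\mathrm{BString}_{n-3}\to \mathrm{BSpin}_{n-3}$, is exactly $\frac{1}{2}p_1(\xi)\in H^4(N)$. This vanishes by hypothesis, so $\xi$ lifts to $\mathrm{BString}_{n-3}$. Here $\pi_5$, $\pi_6$ of $\mathrm{BSpin}$ vanish in the stable range (and $\pi_7$ vanishes too, since $\pi_6(\mathrm{SO})=0$), so $\mathrm{BString}_{n-3}$ is $7$-connected; in particular it is $4$-connected.

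Third, I would apply $[-,\mathrm{BString}_{n-3}]$ to the cofibration $N^{(4)}\to N\to N_5$. The restriction of the lift to $N^{(4)}$ is null-homotopic because $N^{(4)}$ is $4$-dimensional and the target is $4$-connected. Exactness of the Puppe sequence then gives a map $N_5\to \mathrm{BString}_{n-3}$ whose composite with $N\to N_5$ is the lift. Composing with $\mathrm{BString}_{n-3}\to \mathrm{BO}_{n-3}$ yields the desired bundle over $N_5$.

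The main obstacle is the second step: identifying the first $k$-invariant of $\mathrm{BSpin}_{n-3}$ with $\frac{1}{2}p_1$ in the unstable range. My approach is to compare with the stable $\mathrm{BSpin}$ via $\mathrm{BSpin}_{n-3}\to\mathrm{BSpin}$, which is $(n-3)$-connected and hence $5$-connected. This map identifies $H^4$ and $\pi_4$, so the generator $X$ recalled in the introduction serves as the fundamental class. The obstruction to lifting over the $4$-skeleton is then $\xi^\ast X=\frac{1}{2}p_1(\xi)$.

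Alternatively, one can bypass string structures entirely. Since $N$ is $3$-connected, one can take $N^{(4)}$ to be a wedge of $4$-spheres. The restriction of the spin lift to each sphere is detected in $\pi_4(\mathrm{BSpin}_{n-3})\cong\mathbb{Z}$ by $\frac{1}{2}p_1$, which vanishes by hypothesis. Hence the restriction to $N^{(4)}$ is null, and the factorization through $N_5$ follows directly from the cofibration, exactly as in Lemma \ref{fibinN3}.
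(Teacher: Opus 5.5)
Your proposal is correct and follows the paper's argument: the spin lift comes from $3$-connectivity, the string lift from $\frac{1}{2}p_1(\xi)=0$, and then $[-,\mathrm{BString}_{n-3}]$ is applied to $N^{(4)}\to N\to N_5$; your wedge-of-$4$-spheres variant is just a more explicit version of the same factorization through $N_5$. One harmless slip is that the $4$-connected cover of $\mathrm{BSpin}_{n-3}$ need not be $7$-connected for small $n$ (e.g.\ $\pi_5(\mathrm{BSpin}_5)\cong\pi_4(Sp(2))\cong\mathbb{Z}_2$ and $\pi_6(\mathrm{BSpin}_6)\cong\pi_5(SU(4))\cong\mathbb{Z}$), so it is not $\mathrm{BO}_{n-3}\langle 8\rangle$ in general, but your argument only uses $4$-connectivity.
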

\begin{proof}
Since $N$ is $3$-connected, then $\xi$ admits a spin structure. From the unstable version of the Postnikov tower in \cite[pp.44]{BuNa}, $\frac{1}{2} p_1(\xi)=0$ guarantees that $\xi$ admits a string structure.  
  Applying the functor $[-,\mm{BString}_{n-3}]$ to \eqref{cofskel} with $k=4$ completes the proof.
\end{proof}

By the same argument for Lemma \ref{cellofN3}, we also have 
\begin{lemma}\label{cellofN5}
	Suppose $\frac{1}{2} p_1(N)\ne 0$. Then $N_5$ is $4$-connected with homotopy type
$N^{(n-4)}_5\cup_\alpha e^n$ satisfying: 
\begin{enumerate}
	\item $N^{(n-4)}_5$ is the $(n-4)$-skeleton of 
	$N_5$ and contains at least one  $(n-4)$-cell.	\item $\alpha:S^{n-1}\to N^{(n-4)}_5$ is the attaching map of the top $n$-cell.
	\end{enumerate}
Moreover, there exists a generator $y\in H^{n-4}(N_5)$ such that: 
\begin{enumerate}
	\item If $\frac{1}{2} p_1(N)\equiv \text{1 or 5}\pmod 6$, then $0\ne \mm{Sq}^4\rho_2 (y)\in H^n(N_5;\mb{Z}_2)$
	and $0\ne \mathcal P^1\rho_3(y)\in H^n(N_5;\mb{Z}_3)$;
	\item If $\frac{1}{2} p_1(N)\equiv 3\pmod 6$, then $0\ne \mm{Sq}^4\rho_2 (y)\in H^n(N_5;\mb{Z}_2)$,
	while $\mathcal P^1:H^{n-4}(N_5;\mb{Z}_3)\to  H^n(N_5;\mb{Z}_3)$ is trivial.
\end{enumerate}
\end{lemma}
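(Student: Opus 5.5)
We follow the argument of Lemma \ref{cellofN3}, now applying the cofibration \eqref{cofskel} with $k=4$. Since $N$ is $3$-connected, we may choose a minimal cell structure whose $4$-skeleton $N^{(4)}$ is a wedge of $4$-spheres. Then $N_5=N/N^{(4)}$ has no cells in dimensions $1,\dots,4$, so $N_5$ is $4$-connected.

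\textbf{Cohomology of $N_5$.} The exact sequence \eqref{cohoexact} reads
$$H^{i-1}(N^{(4)})\to H^i(N_5)\to H^i(N)\to H^i(N^{(4)}).$$
Since $N^{(4)}$ has cohomology only in degrees $0$ and $4$, and $n-4\ge 4$ with $n\ge 8$, this gives $H^i(N_5)\cong H^i(N)$ for $n-4\le i\le n$. In the edge case $i=n-4=4$ (when $n=8$), $H^i(N_5)\to H^i(N)$ is still injective, and we argue directly. By Poincar\'e duality and the universal coefficient theorem,
$$H^{n-1}(N)\cong H_1(N)=0,\qquad H^{n-2}(N)\cong H_2=0,\qquad H^{n-3}(N)\cong H_3=0.$$
It follows that $H_{n-1}(N_5)=H_{n-2}(N_5)=0$, that $H_{n-3}(N_5)$ is free, and that $H_n(N_5)=\mb{Z}$. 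The hypothesis $\frac12p_1(N)\ne0$ forces $H^4(N)\ne0$. Poincar\'e duality then gives $H^{n-4}(N)\ne 0$, so there is at least one $(n-4)$-cell. Items (1) and (2) on the cell structure then follow from \cite[Proposition 4.1]{Wall1965}, exactly as before: the top cell can be split off, and the remaining complex has dimension at most $n-4$, since the homology in degrees $n-3,n-2,n-1$ vanishes or is free without torsion carrying higher cells. This step is the one needing the most care. We must check that Wall's result applies: $N_5$ is a simply connected Poincar\'e-type complex only up to the lower degrees, and we only need $H_{n-1}=H_{n-2}=H_{n-3}=0$ rather than full duality. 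We verify $H_{n-3}(N_5)\cong H^{n-3}(N_5)$-free and $H^{n-3}(N_5)\cong H^{n-3}(N)=0$ (for $n\ge 8$, $n-3\ge5$), so no cells are needed in dimensions $n-3$ to $n-1$.

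\textbf{Cohomology operations.} For the operations, we use Lemma \ref{Sq4onN} together with the naturality square analogous to the one in Lemma \ref{cellofN3}, now for $\mm{Sq}^4$ and for $\mathcal P^1$. Here $p^\ast:H^{n-4}(N_5)\to H^{n-4}(N)$ and $p^\ast:H^n(N_5;\mb{Z}_p)\to H^n(N;\mb{Z}_p)$ are isomorphisms, and we also need $p^*$ to be an isomorphism on $H^{n-4}(-;\mb{Z}_p)$. That last fact follows since both integral groups in degrees $n-4$ and $n-3$ agree, by the five lemma applied to the universal coefficient sequences (for $n=8$ one uses that $H^4(N_5)\to H^4(N)$ is an isomorphism: $H^3(N^{(4)})=0$, and the next map $H^4(N)\to H^4(N^{(4)})$ is zero because the $5$-cells attach trivially in homology in a minimal structure).

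\textbf{The two cases.} In case (1), Lemma \ref{Sq4onN}(3) gives a generator $x\in H^{n-4}(N)$ with both $\mm{Sq}^4\rho_2(x)\ne0$ and $\mathcal P^1\rho_3(x)\ne0$. We take $y=(p^\ast)^{-1}x$; by naturality $y$ satisfies both nonvanishing conditions. In case (2), $\frac12p_1\equiv3\pmod 6$ means $\frac12 p_1$ is odd and $p_1\equiv0\pmod3$. Lemma \ref{Sq4onN}(2) then gives $y$ with $\mm{Sq}^4\rho_2(y)\ne0$, and Lemma \ref{Sq4onN}(1) shows that $\mathcal P^1$ vanishes on $H^{n-4}(N;\mb{Z}_3)$. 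Transporting this vanishing along the isomorphisms $p^\ast$ shows that $\mathcal P^1$ is trivial on $H^{n-4}(N_5;\mb{Z}_3)$.
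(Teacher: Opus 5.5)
Your route is the paper's. Its proof of this lemma is only the remark that the argument of Lemma \ref{cellofN3} carries over with $k=4$: the sequence \eqref{cohoexact}, duality and universal coefficients to kill $H_{n-3},H_{n-2},H_{n-1}$ of $N_5$, Wall's minimal cell structure, and naturality of $\mm{Sq}^4$ and $\mathcal P^1$ applied to Lemma \ref{Sq4onN}. For $n\ge 10$ your write-up of this is fine.

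\textbf{The genuine gap: the case $n=8$.} You assert that $H^4(N_5)\to H^4(N)$ is an isomorphism because $H^4(N)\to H^4(N^{(4)})$ is zero. That is false. The kernel of restriction to the $4$-skeleton is the image of $\tilde H^4(N_5)$, and you have yourself shown that $N_5$ is $4$-connected. So $H^4(N_5)=0$, while $H^4(N)\neq 0$ by hypothesis.

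No argument can repair this, because the statement itself fails at $n=8$. A $3$-connected closed $8$-manifold has $H_5=H_6=H_7=0$ and $H_4$ free, since its torsion is that of $H^5\cong H_3=0$. Hence $N\simeq\bigvee S^4\cup e^8$ and $N_5\simeq S^8$, which has no $4$-cell and no class $y$ in degree $n-4$. An explicit instance is $N=\mathbb{HP}^2$, where $\frac12 p_1(N)=u$ generates $H^4$.

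Both your argument and the paper's ``same argument'' need $n-4$ to exceed the connectivity of $N_5$, i.e.\ $n\ge 9$. Consistently, at $n=8$ one has $\pi_7(S^4)\cong\mb{Z}\oplus\mb{Z}_{12}$, so Lemma \ref{homotogroup2} breaks there as well. The right move was to flag this case, not to force it.

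\textbf{A smaller inaccuracy at $n=9$.} Your claim $H^i(N_5)\cong H^i(N)$ for $n-4\le i\le n$ needs $i\ge 6$. At $i=5$ the sequence reads
$$H^4(N)\to H^4(N^{(4)})\to H^5(N_5)\to H^5(N)\to 0,$$
and the first map is not onto when $H_4(N)$ has torsion, which can happen for $n=9$. So there you only get surjectivity of $p^\ast$ in degree $n-4$, and your five-lemma claim with $\mb{Z}_p$-coefficients may fail.

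Surjectivity is, however, all that is needed:
\begin{itemize}
\item It gives the $(n-4)$-cell.
\item It gives a lift $y$ of the class $x$ from Lemma \ref{Sq4onN}. Any lift is again primitive, since $H^{n-4}(N_5)$ is free and $x$ is primitive modulo torsion.
\item The vanishing of $\mathcal P^1$ in case (2) only uses injectivity of $p^\ast\colon H^n(N_5;\mb{Z}_3)\to H^n(N;\mb{Z}_3)$.
\end{itemize}
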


Similarly, we have
\begin{lemma}\label{homotogroup2}
	The homotopy group $\pi_{n-1}(N_5^{(n-4)}\vee S^{n-4})$ is isomorphic to $\pi_{n-1}(N_5^{(n-2)})\oplus \pi_{n-1}( S^{n-4})$ where $\pi_{n-1}( S^{n-4})\cong \mb{Z}_8\oplus \mb{Z}_3$.
\end{lemma}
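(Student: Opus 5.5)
The plan is to identify $N_5^{(n-2)}$ with $N_5^{(n-4)}$ using the cell structure from Lemma \ref{cellofN5}. Then I would split off the wedge summand by the same relative Hurewicz argument as in Lemma \ref{homotogroup}, and finally identify $\pi_{n-1}(S^{n-4})$ with the stable $3$-stem.

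\textbf{Step 1: the skeleta agree.} By Lemma \ref{cellofN5}, $N_5$ has the homotopy type of $N_5^{(n-4)}\cup_\alpha e^n$, where $N_5^{(n-4)}$ is the $(n-4)$-skeleton. In this cell structure there are no cells in dimensions $n-3$, $n-2$ and $n-1$. So the $(n-2)$-skeleton coincides with the $(n-4)$-skeleton: $N_5^{(n-2)}=N_5^{(n-4)}$, and likewise $N_5^{(n-1)}=N_5^{(n-4)}$.

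This is consistent with homology. From the sequence \eqref{cohoexact} with $N^{(4)}$ in place of $N^{(2)}$, together with Poincar\'e duality and the $3$-connectivity of $N$, we get $H^{n-1}(N)\cong H_1(N)=0$, $H^{n-2}(N)\cong H_2(N)=0$ and $H^{n-3}(N)\cong H_3(N)=0$. Hence $\pi_{n-1}(N_5^{(n-2)})=\pi_{n-1}(N_5^{(n-4)})$, and it suffices to prove
$$\pi_{n-1}(N_5^{(n-4)}\vee S^{n-4})\cong \pi_{n-1}(N_5^{(n-4)})\oplus\pi_{n-1}(S^{n-4}).$$

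\textbf{Step 2: splitting off the wedge.} By Lemma \ref{cellofN5}, $N_5^{(n-4)}$ is $4$-connected, and $S^{n-4}$ is $(n-5)$-connected. The cells of $N_5^{(n-4)}\times S^{n-4}$ that do not lie in the wedge are products of cells of dimension $\ge 5$ and $n-4$, so they have dimension $\ge n+1$. Hence
$$\pi_k(N_5^{(n-4)}\times S^{n-4},\,N_5^{(n-4)}\vee S^{n-4})=0 \quad\text{for } k\le n,$$
either by cellular approximation or by the relative Hurewicz theorem as in Lemma \ref{homotogroup}.

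The long exact sequence of the pair then gives that the inclusion induces an isomorphism
$$\pi_{n-1}(N_5^{(n-4)}\vee S^{n-4})\cong \pi_{n-1}(N_5^{(n-4)}\times S^{n-4})=\pi_{n-1}(N_5^{(n-4)})\oplus\pi_{n-1}(S^{n-4}).$$
The inclusions of the two wedge summands, together with the two projections, make this decomposition explicit.

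\textbf{Step 3: the sphere summand, and the main obstacle.} Here $\pi_{n-1}(S^{n-4})=\pi_{(n-4)+3}(S^{n-4})$. When $n-4\ge 5$, i.e. $n\ge 9$, this lies in the stable range, so it equals $\pi_3^s\cong\mb{Z}_{24}\cong\mb{Z}_8\oplus\mb{Z}_3$.

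I expect the boundary case $n=8$ to be the main obstacle. There the group is $\pi_7(S^4)\cong\mb{Z}\oplus\mb{Z}_{12}$, so the stated identification with $\mb{Z}_8\oplus\mb{Z}_3$ fails as written. I would handle it as follows. The direct sum decomposition of Steps 1–2 holds for all $n\ge 8$. The identification with $\mb{Z}_8\oplus\mb{Z}_3$ I would assert for $n\ge 9$. For $n=8$ I would replace it by the $2$- and $3$-primary torsion part $\mb{Z}_4\oplus\mb{Z}_3\subset\pi_7(S^4)$ and the free summand generated by the Hopf map, and check separately whether later arguments only use the torsion part.
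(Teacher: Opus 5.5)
Your Steps 1--2 are the paper's argument. The paper proves this lemma only by saying ``similarly'' to Lemma~\ref{homotogroup}: the pair $(N_5^{(n-4)}\times S^{n-4},\,N_5^{(n-4)}\vee S^{n-4})$ is $n$-connected, so the long exact sequence splits. Reading $N_5^{(n-2)}$ as $N_5^{(n-4)}$ is the right way to handle that typo.

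Your caveat in Step 3 is also correct and points to a genuine error in the paper. The paper asserts $\pi_{n-1}(S^{n-4})\cong\mathbb{Z}_8\oplus\mathbb{Z}_3$ for all $n\ge 8$, but $\pi_7(S^4)\cong\mathbb{Z}\oplus\mathbb{Z}_{12}$. Moreover, at $n=8$ the later arguments cannot be rescued by passing to torsion. There $N_5$ is $4$-connected, so $H^{4}(N_5)=0$, and the class $y$ of Lemma~\ref{cellofN5} does not exist.
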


\begin{lemma}\label{coaction2}
Let $\alpha:S^{n-1}\to N^{(n-4)}_5$ be the attaching map from Lemma \ref{cellofN5}.
	 There exists a map 
	$$\phi :N^{(n-4)}_5\to N^{(n-4)}_5\vee S^{n-4}$$
	 such that $\phi_\ast( \alpha)\in \pi_{n-1}(N^{(n-4)}_5\vee S^{n-4})$ decomposes as follows:
	 \begin{enumerate}
	 \item $(\alpha,\ell,\delta)\in \pi_{n-1}(N^{(n-4)}_5)\oplus \mb{Z}_8\oplus \mb{Z}_3$ if $\frac{1}{2} p_1(N)\equiv \text{ 1 or 5}\pmod 6$,
	 	\item $(\alpha,\ell,0)\in \pi_{n-1}(N^{(n-4)}_5)\oplus \mb{Z}_8\oplus \mb{Z}_3$ if $\frac{1}{2} p_1(N)\equiv 3\pmod 6$.
	 \end{enumerate}
	 where  $\ell$ generates $\mb{Z}_8$ and $\delta$ generates $\mb{Z}_3$.
\end{lemma}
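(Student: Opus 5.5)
I will follow the proof of Lemma \ref{coaction} step by step, replacing $\mm{Sq}^2$ with $\mm{Sq}^4$ and $\mathcal P^1$, and $\pi_{n-1}(S^{n-2})\cong\mb{Z}_2$ with $\pi_{n-1}(S^{n-4})\cong\mb{Z}_8\oplus\mb{Z}_3$ (the stable $3$-stem, valid since $n\ge 8$). Let $y\in H^{n-4}(N_5)$ be the generator given by Lemma \ref{cellofN5}. Let $\bar y$ be the corresponding class in $H^{n-4}(N_5^{(n-4)})$; the restriction is an isomorphism because $N_5$ is obtained from $N_5^{(n-4)}$ by attaching only the top cell. By Wall's minimal cell structure \cite[Theorem G]{Wall1965} I may write $N_5^{(n-4)}\simeq N_5^{(n-5)}\cup e^{n-4}_1\cup\cdots\cup e^{n-4}_d$, with $e^{n-4}_1$ dual to $\bar y$. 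I then pinch the equatorial sphere $S^{n-5}\times\frac12\subset e^{n-4}_1$ to get a coaction map $\bar\phi:N_5^{(n-4)}\to N_5^{(n-4)}\vee S^{n-4}$. The composite $p_1\circ\bar\phi$ is a homology isomorphism between simply connected complexes, hence a homotopy equivalence with homotopy inverse $g$. Set $\phi=(g\vee\mm{id})\circ\bar\phi$.

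By Lemma \ref{homotogroup2} (relative Hurewicz, using that $N_5^{(n-4)}$ is $4$-connected, so the pair (product, wedge) is $(n+1)$-connected), we can write
$$\phi_\ast(\alpha)=(\alpha',\gamma),\qquad \alpha'\in\pi_{n-1}(N_5^{(n-4)}),\quad \gamma\in\pi_{n-1}(S^{n-4}).$$
Since $p_1\circ\phi\simeq\mm{id}$, the first component is $\alpha'=\alpha$. The second component $p_2\circ\phi\circ\alpha$ equals $\gamma$, so $p_2\circ\phi$ extends to
$$T:N_5\simeq N_5^{(n-4)}\cup_\alpha e^n\longrightarrow Q=S^{n-4}\cup_\gamma e^n,$$
which has degree one on $H^n$. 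This is because $\phi$ is the identity on the top cell up to the coaction, exactly as in Lemma \ref{coaction}. By construction $T^\ast$ sends the generator of $H^{n-4}(Q)$ to $y$. Naturality of Steenrod operations then says: if $\mm{Sq}^4\rho_2(y)\ne0$, then $\mm{Sq}^4$ is nonzero on $H^{n-4}(Q;\mb{Z}_2)$; if $\mathcal P^1\rho_3(y)\ne0$, then $\mathcal P^1$ is nonzero on $H^{n-4}(Q;\mb{Z}_3)$. Conversely, when $\mathcal P^1$ vanishes on $N_5$, the same argument run with $T^\ast$ injective on $H^n$ shows $\mathcal P^1$ vanishes on $Q$.

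The final step, which I expect to be the main obstacle, is detecting $\gamma$ by these operations. On the $2$-primary part, $\mm{Sq}^4$ on $S^{n-4}\cup_\gamma e^n$ detects $\gamma$ modulo $2$, i.e. it is nonzero iff the $\mb{Z}_8$-component is an odd multiple of $\nu$. This is the Hopf invariant one / $e$-invariant statement in the stable range, and since $\mb{Z}_8$ has automorphisms carrying any odd element to $\nu$, that component is a generator $\ell$. On the $3$-primary part, $\mathcal P^1$ detects $\alpha_1$, which generates $\mb{Z}_3$. Hence the component is a generator $\delta$ in case (1), and $0$ in case (2) because $\mathcal P^1$ is trivial there. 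I would cite Toda's tables and the standard detection of $\nu$ by $\mm{Sq}^4$ and of $\alpha_1$ by $\mathcal P^1$, each valid stably and therefore for $n-4\ge4$.

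A subtlety is that the stated $\ell$ means ``some generator'' rather than $\nu$ itself. If the later application needs exactly $\nu$, I would precompose $\phi$ with a self-map of $S^{n-4}$ of odd degree coprime to $3$, chosen so the composite fixes the $\mb{Z}_3$-part. For example, degree $k$ with $k\equiv1\pmod3$ acts on $\pi_{n-1}(S^{n-4})$ stably by multiplication by $k$. Choosing $k$ as a suitable inverse mod $8$ and $\equiv1\pmod3$ normalizes $\ell$ to $\nu$ while preserving $\delta$, and such a $k$ exists by the Chinese remainder theorem.
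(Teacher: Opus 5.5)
Your proposal is correct and follows the paper's proof. It uses the same pinch map $\phi=(g\vee\mathrm{id})\circ\bar\phi$ as in Lemma~\ref{coaction} and the splitting of Lemma~\ref{homotogroup2}, then detects the $\mathbb{Z}_8$- and $\mathbb{Z}_3$-components of $\gamma$ by $\mathrm{Sq}^4$ and $\mathcal P^1$ on $Q=S^{n-4}\cup_\gamma e^n$ via Lemma~\ref{cellofN5}, with more detail than the paper gives. Two small corrections: $\pi_{n-1}(S^{n-4})$ is the stable $3$-stem only for $n-4\ge 5$ (at $n=8$, $\pi_7(S^4)\cong\mathbb{Z}\oplus\mathbb{Z}_{12}$), a slip inherited from Lemma~\ref{homotogroup2}; and the pair $(N_5^{(n-4)}\times S^{n-4},N_5^{(n-4)}\vee S^{n-4})$ is in general only $n$-connected, though that is all your argument needs.
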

\begin{proof}
	The construction of $\phi$ is identical to that in Lemma \ref{coaction}.
	
	 Note that a generator of $\mb{Z}_8\subset \pi_{n-1}(S^{n-4})$ is detected by $\mm{Sq}^4$, a generator of $\mb{Z}_3\subset \pi_{n-1}(S^{n-4})$ is detected by $\mathcal P^1$. The desired component decompositions then follow from Lemma \ref{cellofN5}.  
\end{proof}

We now analyze the cell structure of the total space $E$ of the $S^{n-4}$-bundle over $N_5$ (see Lemma \ref{fibinN5}).
\begin{lemma}\label{cellofEn2}
	(1) The $n$-skeleton of $E$ has a homotopy type $$E^{(n)}\simeq (N_5^{(n-4)}\vee S^{n-4})\cup_{ f} e^n$$
	where $f$ is the attaching map.
	
	(2) $f$ is represented by three components:
	$$(\alpha,\beta,\gamma)\in \pi_{n-1}(N_5^{(n-4)}\vee S^{n-4})\cong \pi_{n-1}(N_5^{(n-4)})\oplus \mb{Z}_8\oplus \mb{Z}_3$$ 
	where $\alpha$ is the attaching map of the $n$-cell of $N_5$. 
	
	(3) In particular, if $n$ is even, $\gamma=0$.
\end{lemma}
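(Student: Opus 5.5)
Items (1) and (2) follow the pattern of Lemma \ref{cellofEn}, and item (3) needs a separate mod-$3$ argument.

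\textbf{Items (1) and (2).} By Lemma \ref{cellofN5}, $N_5$ is $4$-connected. Its cohomology vanishes in degree $n-1$ and in the low range that matters, and $H^n(N_5)\cong\mb{Z}$.

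First, the Euler class of the bundle $S^{n-4}\to E\to N_5$ lies in $H^{n-3}(N_5)$. This group vanishes for dimensional reasons: the cells of $N_5$ sit in dimensions $\le n-4$ and $n$. Hence there is a section over $N_5^{(n-4)}$, and the Gysin sequence gives the ring isomorphism $H^*(E)\cong H^*(S^{n-4}\times N_5)$.

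It follows that $E^{(n)}\simeq (N_5^{(n-4)}\cup_b e^{n-4})\cup_f e^n$. The extra cell $e^{n-4}$ carries the fiber class.

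Next, use cellular approximation to arrange that $\bar\pi$ restricts to the identity on $N_5^{(n-4)}$. This forces $b\simeq *$, giving item (1).

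For item (2), first decompose $\pi_{n-1}(N_5^{(n-4)}\vee S^{n-4})$ as in Lemma \ref{homotogroup2}. The relative groups of the product pair vanish in degree $n-1$ because both factors are at least $3$-connected, and $\pi_{n-1}(S^{n-4})=\pi_3^s\cong\mb{Z}_{24}=\mb{Z}_8\oplus\mb{Z}_3$ since $n\ge 8$. The map $\bar\pi|_{E^{(n)}}$ collapses $S^{n-4}$ and carries the top cell of $E^{(n)}$ onto the top cell of $N_5$ with degree one. Therefore the first component of $f$ is $\alpha$, and $f=(\alpha,\beta,\gamma)$.

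\textbf{Item (3).} The plan is to show that $\mathcal P^1$ vanishes on the class $u\in H^{n-4}(E;\mb{Z}_3)$ coming from the fiber sphere. This is enough because, after pinching $N_5^{(n-4)}$, we get $E^{(n)}/N_5^{(n-4)}\simeq S^{n-4}\cup_{(\beta,\gamma)}e^n$, and in this complex $\mathcal P^1$ detects exactly the $\mb{Z}_3$-component $\gamma$.

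Choose $u$ as the mod $3$ reduction of the integral class $\sigma\in H^{n-4}(E)$ that restricts to a generator on the fiber and is given by the splitting $H^*(E)\cong H^*(S^{n-4}\times N_5)$. The key structural fact is that $E$ is the sphere bundle of a rank-$(n-3)$ string bundle $\eta$ over $N_5$. A standard formula for the Steenrod action on such a class expresses $\mathcal P^1 u$ as $u\cup \bar\pi^*c$ plus a term $\bar\pi^*(\cdots)$, where $c$ is a mod-$3$ characteristic class of $\eta$ in degree $4$ (essentially $p_1\bmod 3$). Here the string hypothesis gives $p_1(\eta)=0$. Moreover $H^4(N_5)=0$ anyway.

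One could instead try to pass to the Thom space $T(\eta)$, but I would rather argue directly as follows. $\mathcal P^1 u$ lies in $H^n(E;\mb{Z}_3)$, which by the splitting is $\bar\pi^*H^n(N_5)\oplus \sigma\cdot H^4(N_5)$. The second summand is zero, so $\mathcal P^1u=\lambda\,\bar\pi^*[N_5]$ for some $\lambda\in\mb{Z}_3$, and the task is to show $\lambda=0$.

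The approach I would try first is Poincaré duality on the closed manifold $M$, combined with naturality along $q:M\to E$.
\begin{itemize}
\item $\mathcal P^1$ from $H^{n-4}(M;\mb{Z}_3)$ to $H^{n}(M;\mb{Z}_3)$ is computed by cupping with the Wu class $v_1(M)\equiv p_1(M)\pmod 3$.
\item In the splitting of $H^*(M)$, the top-dimensional part in degree $2n-4$ is $\sigma\cdot[N]$. By contrast, $u\cup v_1(M)$ lives in degree $n$.
\item Evaluating $\lambda$ amounts to computing $\langle u\cdot\mathcal P^1(u),[M]\rangle$. This is a self-pairing of $u$ in the middle dimension $2n-8$, or rather a computation of $\mathcal P^1$ landing in degree $2n-4$.
\end{itemize}
Here the parity of $n$ should enter. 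When $n$ is even, $n-4$ is even, so $u^2$ and the sign conventions make $\mathcal P^1(u)\cup u$ computable as a graded-commutativity or Adem-type obstruction that vanishes. When $n$ is odd, the fiber sphere is odd-dimensional and $\pi_{n-1}(S^{n-4})$ behaves differently, which fits with item (3) being stated only for $n$ even.

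\textbf{Main obstacle.} I expect the hardest step to be this last one: a clean argument for $\gamma=0$ when $n$ is even. If the duality approach fails, my fallback is the observation that $\gamma$ is the image of the $J$-homomorphism applied to the clutching data of $\eta$ on the top cell. For a string structure the relevant class in $\pi_{n-1}(SO(n-3))$ should have trivial $3$-primary $J$-image in the even case.
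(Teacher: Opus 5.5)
Your items (1) and (2) follow the paper, which simply defers to Lemma~\ref{cellofEn}. There are two small slips there. First, the splitting of $\pi_{n-1}(N_5^{(n-4)}\vee S^{n-4})$ needs $N_5^{(n-4)}$ to be $4$-connected and $S^{n-4}$ to be $(n-5)$-connected, so that the first product cell has dimension $n+1$; $3$-connectedness of both factors is not enough. Second, $\pi_{n-1}(S^{n-4})\cong\mathbb{Z}_{24}$ requires $n\ge 9$, since $\pi_7(S^4)\cong\mathbb{Z}\oplus\mathbb{Z}_{12}$ (for $n=8$ one even has $N_5\simeq S^8$); the paper makes the same slip.

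The genuine gap is item (3). You reduce it to showing $\lambda=0$ in $\mathcal P^1u=\lambda\,\bar\pi^\ast[N_5]$, but none of your routes proves this.
\begin{itemize}
\item The ``standard formula'' gives nothing here. $H^4(N_5)=0$ kills the $u\cup\bar\pi^\ast c$ term, and the remaining $\bar\pi^\ast(\cdots)$ term is exactly the unknown $\lambda\,\bar\pi^\ast[N_5]$.
\item The Wu-class bullet does not apply. Wu's formula describes $\mathcal P^1$ only on $H^{2n-8}(M;\mathbb{Z}_3)$, into the top degree $2n-4$ of $M$. It says nothing about $\mathcal P^1\colon H^{n-4}(M;\mathbb{Z}_3)\to H^{n}(M;\mathbb{Z}_3)$.
\item The $J$-homomorphism fallback is only asserted.
\end{itemize}

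The missing idea is the Cartan formula applied to a fiber class of square zero. The paper uses it; your remark about graded commutativity only gestures at it. Suppose $u\in H^{n-4}(E;\mathbb{Z}_3)$ restricts to a generator on the fiber and $u^2=0$. For $n$ even, $u$ and $\mathcal P^1u$ (of degrees $n-4$ and $n$) commute, so
\[
0=\mathcal P^1(u^2)=\mathcal P^1(u)\cup u+u\cup\mathcal P^1(u)=2\,u\cup\mathcal P^1(u)=2\lambda\,u\cup\bar\pi^\ast[N_5].
\]
By Leray--Hirsch, $u\cup\bar\pi^\ast[N_5]$ generates $H^{2n-4}(E;\mathbb{Z}_3)$, whence $\lambda=0$. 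For $n$ odd the two Cartan terms cancel identically, which is why (3) needs $n$ even.

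When you write this out, keep track of which fiber class you use. The class that detects $\gamma$ after pinching is the class $x_0$ vanishing on the section $N_5^{(n-4)}$. For $n>8$, Leray--Hirsch only gives $x_0^2=x_0\cup\bar\pi^\ast b$ for some $b\in H^{n-4}(N_5;\mathbb{Z}_3)$. Setting $u=x_0+\bar\pi^\ast b$ gives $u^2=0$, and the argument above then yields $\mathcal P^1x_0=-\bar\pi^\ast\mathcal P^1 b$.

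This vanishes when $\mathcal P^1$ is trivial on $H^{n-4}(N_5;\mathbb{Z}_3)$, i.e.\ in the case $\frac12p_1(N)\equiv 3\pmod 6$ of Lemma~\ref{cellofN5}. That is the only place (3) is used (Lemma~\ref{loopspit2}). Without that hypothesis, $\gamma$ genuinely depends on the choice of section and need not vanish. This is also why the $J$-homomorphism route cannot work as stated: $\gamma$ is not determined by the clutching data of $\eta$ alone.

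The paper reads $x^2=0$ for the wedge-summand class directly off the ring isomorphism \eqref{cohoiso2} and glosses over this point. Your choice of $u$ ``given by the splitting'' has the same ambiguity.
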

\begin{proof}
As the proof for Lemma \ref{cellofEn}, Items (1) and (2) in Lemma \ref{cellofEn2} follow by the bundle structure. 

Applying the Gysin sequence, we have the ring isomorphism
	\begin{equation}
	H^\ast(E;\mb{Z}_3)\cong H^\ast(S^{n-4}\times N_5;\mb{Z}_3).\label{cohoiso2}
	\end{equation}	
	Assume $n$ is even. For any class $x\in H^{n-4}(S^{n-4};\mb{Z}_3)\subset H^{n-4}(E^{(n)};\mb{Z}_3)$, we have $x^2=0$. If $\mathcal P^1 x\ne 0$, the Cartan formula gives 
	$$\mathcal P^1(x^2)=\mathcal P^1(x)\cup x+x\cup \mathcal P^1(x)=(1+(-1)^{n(n-4)})\mathcal P^1(x)\cup x.$$
	By \eqref{cohoiso2} and Item (1), we have $\mathcal P^1(x)\cup x\ne 0$. 
	Since $n$ is even, $$\mathcal P^1(x^2)=2\mathcal P^1(x)\cup x\ne 0$$
	which contradicts $x^2=0$. Therefore $\gamma=0$.
\end{proof}

\begin{lemma}\label{loopspit2}
	If either $\frac{1}{2} p_1(N)\equiv \text{ 1, 5}\pmod 6$, or $\frac{1}{2} p_1(N)\equiv 3\pmod 6$ and $n$ is even, then the $S^{n-4}$-bundle over $N_5$ 
$$S^{n-4}\stackrel{\mm{i}}{\to} E\stackrel{\bar \pi}{\to }N_5$$
 splits after looping, i.e., $\Omega E\simeq \Omega S^{n-4}\times \Omega N_5$.
\end{lemma}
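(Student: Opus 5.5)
The plan is to imitate the proof of Lemma \ref{loopspit}: construct a map $s:N_5\to E$ with $\bar\pi\circ s$ a homotopy equivalence. Then $\mu\circ(\Omega\mm{i}\times\Omega s):\Omega S^{n-4}\times\Omega N_5\to\Omega E$ induces isomorphisms on all homotopy groups, exactly as in the introduction, and so it is a homotopy equivalence.

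By Lemma \ref{cellofEn2}, $E^{(n)}\simeq (N_5^{(n-4)}\vee S^{n-4})\cup_f e^n$ with $f=(\alpha,\beta,\gamma)$, where $\beta\in\mb{Z}_8$ and $\gamma\in\mb{Z}_3$. Lemma \ref{coaction2} gives maps $\phi$ with $\phi_\ast(\alpha)=(\alpha,\ell,\delta)$, or $(\alpha,\ell,0)$ in the case $\frac12p_1(N)\equiv 3\pmod 6$. The target is to find a self-map
\[\psi:N_5^{(n-4)}\to N_5^{(n-4)}\vee S^{n-4}\]
that is a homotopy equivalence onto the first summand and satisfies $\psi_\ast(\alpha)=(\alpha,\beta,\gamma)$ exactly. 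Given such a $\psi$, we extend it over the top cell to
\[s:N_5=N_5^{(n-4)}\cup_\alpha e^n\to (N_5^{(n-4)}\vee S^{n-4})\cup_f e^n=E^{(n)}\hookrightarrow E.\]
The composite $\bar\pi\circ s$ is the identity on $N_5^{(n-4)}$, since $\bar\pi$ collapses $S^{n-4}$. It has degree one on the top cell, so it is a homology isomorphism between simply connected spaces and hence a homotopy equivalence.

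To build $\psi$, I will use the coaction maps $\phi$ and compose with degree-$k$ self-maps of $S^{n-4}$. Let $\phi_k=(\mm{id}\vee[k])\circ\phi$. Since $n-4\ge 4$, the group $\pi_{n-1}(S^{n-4})$ is stable, and $[k]\circ\eta=k\eta$ for any $\eta\in\pi_{n-1}(S^{n-4})$. This gives $(\phi_k)_\ast(\alpha)=(\alpha,k\ell,k\delta)$, using that the splitting of Lemma \ref{homotogroup2} is natural in the wedge summand. In the first case, the map $k\mapsto(k\ell,k\delta)$ from $\mb{Z}$ to $\mb{Z}_8\oplus\mb{Z}_3\cong\mb{Z}_{24}$ is surjective, because $\ell$ and $\delta$ are generators. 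So the Chinese remainder theorem gives a $k$ with $(k\ell,k\delta)=(\beta,\gamma)$. In the second case, $n$ is even, so $\gamma=0$ by Lemma \ref{cellofEn2}(3), and it suffices to choose $k$ with $k\ell=\beta$. When $(\beta,\gamma)=0$ we may take $k=0$; this is the trivial section $N_5\vee S^{n-4}$ case treated as in Lemma \ref{loopspit}.

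The main obstacle is justifying the formula $(\phi_k)_\ast(\alpha)=(\alpha,k\ell,k\delta)$. This needs two facts. The first is that post-composing with $[k]$ acts on the $S^{n-4}$-component of $\pi_{n-1}(N_5^{(n-4)}\vee S^{n-4})$ as multiplication by $k$. I would check this via the splitting $\pi_{n-1}(A\vee S^{n-4})\cong\pi_{n-1}(A)\oplus\pi_{n-1}(S^{n-4})$ in the relevant range, together with stability of $\pi_{n-1}(S^{n-4})$, where composition is bilinear. The second is that the first component stays $\alpha$ and is unaffected by the change on the sphere summand, which follows from naturality of the projection $p_1$. Lemma \ref{homotogroup2} asserts only an abstract splitting, and its first summand is misprinted as $N_5^{(n-2)}$. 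So I will recheck, as in Lemma \ref{homotogroup}, that the relative groups $\pi_k(N_5^{(n-4)}\times S^{n-4},N_5^{(n-4)}\vee S^{n-4})$ vanish for $k\le n$; this uses that $N_5^{(n-4)}$ is $4$-connected and $n\ge 8$.
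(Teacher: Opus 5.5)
Your argument is the paper's argument. You post-compose the coaction map $\phi$ of Lemma~\ref{coaction2} with $\mathrm{id}\vee[k]$ and choose $k$ (by the Chinese remainder theorem, or using $\gamma=0$ from Lemma~\ref{cellofEn2}(3) in the second case) so that $\psi_\ast(\alpha)=f$. You then extend over the top cell to get a homotopy section and split. Where the paper only says ``one checks $\bar s_\ast(\alpha)=(\alpha,\beta,\gamma)$'', you correctly isolate what is needed: the splitting given by $(p_{1\ast},p_{2\ast})$, and the identity $[k]\circ\eta=k\eta$ in $\pi_{n-1}(S^{n-4})$.

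Your justification of that identity, ``since $n-4\ge 4$, $\pi_{n-1}(S^{n-4})$ is stable'', is false at $n=8$. The paper has the same defect: Lemma~\ref{homotogroup2} asserts $\pi_{n-1}(S^{n-4})\cong\mathbb{Z}_8\oplus\mathbb{Z}_3$ for all $n\ge 8$.

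\textbf{Why the step fails at $n=8$.} The group $\pi_{m+3}(S^m)$ is stable only for $m\ge 5$. In fact $\pi_7(S^4)\cong\mathbb{Z}\oplus\mathbb{Z}_{12}$. Moreover $[k]\circ\nu_4$ has Hopf invariant $k^2$, whereas $k\nu_4$ has Hopf invariant $k$, so your formula fails for $k\ne 0,1$. For $n\ge 9$ your step is fine: by Freudenthal every element of $\pi_{n-1}(S^{n-4})\cong\mathbb{Z}_{24}$ is a suspension, so composition with it is additive in the left variable.

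\textbf{The statement itself fails at $n=8$.} Since $N_5=N/N^{(4)}$ has no $4$-cells, $N_5^{(n-4)}$ is a point, so Lemma~\ref{cellofN5}(1) fails and there is no $\phi$. For a counterexample, take $N=\mathbb{HP}^2$: $\frac12 p_1(N)$ is a generator, hence $\equiv 1 \pmod 6$, and $N_5\simeq S^8$. Let $\xi$ be pulled back from a rank-$5$ bundle over $S^8$ with clutching class $0\ne\theta\in\pi_7(SO(5))\cong\mathbb{Z}$. Then $\frac12 p_1(\xi)=0$. The connecting map of $S^4\to E\to S^8$ sends $\iota_8$ to $\pm\mathrm{ev}_\ast(\theta)\in\pi_7(S^4)$. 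This element has infinite order, because $\ker(\mathrm{ev}_\ast)$ is the image of the finite group $\pi_7(SO(4))$. Hence $\pi_7(E)$ is finite while $\pi_7(S^4)\oplus\pi_7(S^8)$ is not, so $\Omega E\not\simeq\Omega S^4\times\Omega S^8$.

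So your proof, like the paper's, needs $n\ge 9$.
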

\begin{proof}
	The core of the proof is to construct a homotopy section $$s:N_5\to E.$$
	
	First consider the case $\frac{1}{2} p_1(N)\equiv \text{ 1 or 5}\pmod 6$. By Lemma \ref{coaction2}, there exists a map 
	$$\phi :N^{(n-4)}_5\to N^{(n-4)}_5\vee S^{n-4}$$
	 such that $\phi_\ast( \alpha)=(\alpha,\ell,\delta)\in \pi_{n-1}(N^{(n-4)}_5)\oplus \mb{Z}_8\oplus \mb{Z}_3$.
	  By Lemma \ref{cellofEn2}, the attaching map $f$ for the top cell of $E^{(n)}$ takes the form
	$$f=(\alpha,\beta,\gamma)\in \pi_{n-1}(N_5^{(n-4)}\vee S^{n-4})\cong \pi_{n-1}(N_5^{(n-4)})\oplus \mb{Z}_8\oplus \mb{Z}_3.$$
 There exists $m\in \mb{Z}$ so that $(m\ell,m\delta)=(\beta,\gamma)$. Let $\mathfrak m: S^{n-4}\to S^{n-4}$ be a map of degree $m$ and define
$$\bar s:N^{(n-4)}_5\stackrel{\phi}{\to} N^{(n-4)}_5\vee S^{n-4} \stackrel{\mm{id}\vee \mathfrak m}{\to }N^{(n-4)}_5\vee S^{n-4}.$$	
One checks $\bar s_\ast(\alpha)=(\alpha,\beta,\gamma)$. So $\bar s$ extends to a map $$s: N_5\to E^{(n)}\hookrightarrow E.$$ The composition $\bar \pi\circ s:N_5\to N_5$ is a homology equivalence, hence a homotopy equivalence. 

Next suppose $\frac{1}{2} p_1(N)\equiv 3\pmod 6$ and $n$ is even. By Lemma \ref{coaction2}, there exists a map 
	$$\phi :N^{(n-4)}_5\to N^{(n-4)}_5\vee S^{n-4}$$
	 such that
	  $\phi_\ast( \alpha)=(\alpha,\ell,0)\in \pi_{n-1}(N^{(n-4)}_5)\oplus \mb{Z}_8\oplus \mb{Z}_3$.
	  Lemma \ref{cellofEn2} (3) implies the attaching map $f$ has the form
	$$f=(\alpha,\beta,0)\in \pi_{n-1}(N_5^{(n-4)}\vee S^{n-4})\cong \pi_{n-1}(N_5^{(n-4)})\oplus \mb{Z}_8\oplus \mb{Z}_3.$$
	Repeating the above construction yields the desired section $s$.
\end{proof}

\begin{proof}[Proof of Theorem \ref{mainth2}]
	The conclusion follows from Lemma \ref{fibinN5} and Lemma \ref{loopspit2}. 
\end{proof}

\section{Examples}

\begin{example}
	Let $\mathbb{CP}^{2n}$ denote the complex projective space of complex dimension $2n$ with $n\ge 3$. The total Chern class of its tangent bundle is given by
$$c(\mathbb{CP}^{2n})=(1+x)^{2n+1}\in H^\ast(\mathbb{CP}^{2n}),$$
where $x$ is the first Chern class of the canonical complex line bundle $\xi_{\mathbb{C}}$ over $\mathbb{CP}^{2n}$. Since
$$w_2(\mathbb{CP}^{2n})\equiv c_1(\mathbb{CP}^{2n})\pmod{2}\equiv 1\pmod{2},$$
$\mathbb{CP}^{2n}$ is non-spin. 

For $1\le k\le n-1$, let $\eta_{\mathbb{C}}$ be the Whitney sum of $2k$ copies of $\xi_{\mathbb{C}}$, whose total Chern class is
$$c(\eta_{\mathbb{C}})=(1+x)^{2k}.$$
Let $\eta$ denote the underlying real vector bundle of $\eta_{\mathbb{C}}$. We have
$$w_2(\eta)\equiv c_1(\eta_{\mathbb{C}})\pmod{2}\equiv 0\pmod{2},$$
so that $\eta$ is a real spin bundle of rank $4k$. Finally, set
$$\gamma=\eta \oplus \epsilon^{4n-4k-1}$$
where $\epsilon^{4n-4k-1}$ denotes the trivial bundle of rank $4n-4k-1$. The bundle $\gamma$ satisfies the conditions required in Theorem \ref{mainth1}. 
\end{example}

\begin{example}
Let $\mathbb{HP}^{2n}$ denote the quaternionic projective space for $n\ge 2$. By \cite[Corollary 2.3]{Sz}, the first Pontryagin class of $\mathbb{HP}^{2n}$ satisfies
$$p_1(\mathbb{HP}^{2n})=(4n-2)v\in H^4(\mathbb{HP}^{2n})$$
which yields $\frac{1}{2}p_1(\mathbb{HP}^{2n})\equiv 1,3,5\pmod{6}$. Although the ring structure of $\mathrm{KO}(\mathbb{HP}^{2n})$ is explicitly known \cite[Theorem 3.11]{Sand}, a rank-$(4n-1)$ vector bundle over $\mathbb{HP}^{2n}$ with vanishing $\frac{1}{2}p_1$ cannot be directly constructed from this structure.

For each integer $k\ge 2$, we consider the  cofibration sequence
\begin{equation}
	\mathbb{HP}^{k-1}\to \mathbb{HP}^{k}\stackrel{\rho}{\to} S^{4k}. \label{cofibHP}
\end{equation}
We first have two key facts: $\pi_{4k-1}(\mathrm{BO})=0$ and $\pi_{4k}(\mathrm{BO})=\mathbb{Z}$. 
By analyzing the Atiyah-Hirzebruch spectral sequence for $[\Sigma \mathbb{HP}^{k-1},\mathrm{BO}]\cong \mathrm{KO}^0(\Sigma \mathbb{HP}^{k-1})$, we verify that all relevant $E^{p,-p}_2$-terms are finite abelian groups. Consequently, $[\Sigma \mathbb{HP}^{k-1},\mathrm{BO}]$ itself forms a finite abelian group. 
 This yields the following short exact sequences:
$$
\xymatrix@C=0.7cm{
0 \ar[r]&[S^{4k},\mathrm{BO}]\ar[r]\ar@{=}[d] & [ \mathbb{HP}^{k},\mathrm{BO}]\ar[r] \ar@{=}[d]&[ \mathbb{HP}^{k-1},\mathrm{BO}]\ar@{=}[d]\ar[r]&0\\
0\ar[r] &  [S^{4k},\mathrm{BO}_{4k+1}]\ar[r]^{\rho^\ast}&  [ \mathbb{HP}^{k},\mathrm{BO}_{4k+1}]\ar[r]&  [ \mathbb{HP}^{k-1},\mathrm{BO}_{4k+1}]\ar[r]&0
}
$$

We further analyze the diagram of cofibrations
$$
\xymatrix@C=0.9cm{
&S^{8n-1}\ar[d]^{\alpha}& \\
\mathbb{HP}^{2n-2}\ar[r]&\mathbb{HP}^{2n-1}\ar[r]^{\rho}\ar[d]^{\beta} & S^{8n-4}\\
  &  \mathbb{HP}^{2n}&
}
$$
and apply the functor $[-,\mathrm{BO}_{8n-3}]$ to this diagram.

We next compute $\pi_{8n-1}(\mathrm{BO}_{8n-3})\cong \pi_{8n-2}(O(8n-3))$ via the classical fiber bundle sequence $O(k)\to O(k+1)\to S^{k}$, which induces two long exact sequences of homotopy groups:
$$\pi_{k+2}(S^{k})\to \pi_{k+1}(O(k))\to \pi_{k+1}(O(k+1))\to \pi_{k+1}(S^{k}), \label{k1Ok}$$
$$\pi_{k+2}(S^{k+1})\to \pi_{k+1}(O(k+1))\to \pi_{k+1}(O(k+2))\to \pi_{k+1}(S^{k+1}). \label{k1Ok1}$$
For $k=8n-3$, the kernel of the homomorphism $$\pi_{k+1}(O(k+2))\to \pi_{k+1}(O)$$ is $\mb{Z}_2$ \cite[pp.534]{KervaMilnor}. Combined with $\pi_{8n-2}(O)=0$, we obtain $\pi_{8n-2}(O(8n-1))=\mathbb{Z}_2$. For $k\ge5$, $\pi_{k+2}(S^{k})\cong \pi_{k+1}(S^{k})\cong \mathbb{Z}_2$, so the above exact sequences imply that $\pi_{8n-1}(\mathrm{BO}_{8n-3})$ is a finite abelian group.

By the Freudenthal suspension theorem, for all $n\ge2$, the composite map $\rho\circ \alpha:S^{8n-1}\to S^{8n-4}$ is homotopy equivalent to the suspension $\Sigma f$ of some map $f:S^{8n-2}\to S^{8n-5}$. As established in \cite[pp.126]{WhiteGW1978}, every suspended map is a co-H-map, so the induced homomorphism
$$(\rho\circ \alpha)^\ast=(\Sigma f)^\ast:[S^{8n-4},\mathrm{BO}_{8n-3}]=\mathbb{Z}\to [S^{8n-1},\mathrm{BO}_{8n-3}]$$
is a group homomorphism. This guarantees the existence of infinitely many non-trivial real rank-$(8n-3)$ vector bundles $\xi$ over $S^{8n-4}$ whose pullbacks $(\rho\circ \alpha)^\ast \xi$ are trivial.

Considering the Puppe sequence
$$[\mathbb{HP}^{2n},\mathrm{BO}_{8n-3}]\stackrel{\beta^\ast }{\to} [\mathbb{HP}^{2n-1},\mathrm{BO}_{8n-3}]\stackrel{\alpha^\ast }{\to} [S^{8n-1},\mathrm{BO}_{8n-3}],$$
we obtain vector bundles $\eta$ over $\mathbb{HP}^{2n}$ satisfying $\beta^\ast\eta\cong \rho^\ast \xi$. This isomorphism ensures the vanishing condition $\frac{1}{2}p_1(\eta)=0$. The monomorphism $\rho^\ast$ ensures that $\eta$ is non-trivial. 
\end{example}

\end{document}